\newcommand{\tb}{\textbf} 
\newcommand{\R}{\mathbb{R}}  
\newcommand{\N}{\mathbb{N}}  
\renewcommand{\b}{\mathbf} 
\newcommand{\bo}{\bm{\omega}} 
\renewcommand{\d}{\mathrm{d}}
\newcommand{\p}{\partial}
\newcommand{\E}{\mathbb{E}}  
\renewcommand{\P}{\mathbb{P}}  
\newcommand{\K}{\mathscr{S}}
\renewcommand{\S}{\Lambda}
\newcommand{\G}{\mathcal{G}}
\newcommand{\A}{\mathscr{A}}
\newcommand{\Rad}{\mathscr{R}}
\newcommand{\CN}{\mathcal{N}}  
\renewcommand{\O}{O} 
\renewcommand{\o}{o} 
\DeclareMathOperator*{\argmax}{arg\,max}
\newenvironment{changemargin}[2]{%
  \begin{list}{}{%
    \setlength{\topsep}{0pt}%
    \setlength{\leftmargin}{#1}%
    \setlength{\rightmargin}{#2}%
    \setlength{\listparindent}{\parindent}%
    \setlength{\itemindent}{\parindent}%
    \setlength{\parsep}{\parskip}%
  }%
  \item[]}{\end{list}}
\newcommand{\bb}{\mathbb} 
\newcommand{\Cal}{\mathcal}
\newtheorem{theorem}{Theorem}
\newtheorem{corollary}[theorem]{Corollary}
\newtheorem{appxthm}{Theorem}[section]
\newtheorem{appxlem}[appxthm]{Lemma}
\theoremstyle{remark}
\newtheorem{remark}{Remark}
\title{Optimal Rates for Random Fourier Features}
\author{
Bharath K.~Sriperumbudur$^*$\\
Department of Statistics\\
Pennsylvania State University\\
University Park, PA 16802, USA\\
\texttt{bks18@psu.edu} \\
\And
Zolt{\'a}n Szab{\'o}\thanks{Contributed equally.}\\
Gatsby Unit, CSML, UCL\\
Sainsbury Wellcome Centre, 25 Howland Street\\
London - W1T 4JG, UK\\
\texttt{zoltan.szabo@gatsby.ucl.ac.uk}
}
\begin{document}

\maketitle
\begin{abstract}
Kernel methods represent one of the most powerful tools in machine learning to tackle problems expressed in terms of function values and derivatives 
due to their capability to represent and model complex relations. While these methods show good versatility, they are computationally intensive and have poor scalability to large data as they require operations on Gram matrices.
In order to mitigate this serious computational limitation, recently randomized constructions have been proposed in the literature, which allow the application of fast linear algorithms.
Random Fourier features (RFF) are among the most popular and widely applied constructions: they provide an easily computable, low-dimensional feature representation for shift-invariant kernels.
Despite the popularity of RFFs, very little is understood theoretically about their approximation quality. In this paper, we provide a detailed finite-sample theoretical analysis about the approximation quality of RFFs 
by (i) establishing optimal (in terms of the RFF dimension, and growing set size) performance guarantees in uniform norm, and (ii) presenting guarantees in $L^r$ ($1\le r<\infty$) norms. 
We also propose an RFF approximation to derivatives of a kernel with a theoretical study on its approximation quality.
\end{abstract}

\begin{bibunit}[plain_with_initials]
\section{Introduction} \label{sec:intro}
Kernel methods \cite{scholkopf02learning} have enjoyed tremendous success in solving several fundamental problems of machine learning ranging from classification, regression, 
feature extraction, dependency estimation, causal discovery, Bayesian inference and hypothesis testing. Such a success owes to their capability to represent and model complex relations
by mapping points into high (possibly infinite) dimensional feature spaces. 
At the heart of all these techniques is the kernel trick, which allows to \emph{implicitly} compute inner products between these high dimensional feature maps, $\lambda$ via
a kernel function $k$: $k(\b{x},\b{y})=\langle\lambda(\b{x}),\lambda(\b{y})\rangle$. However, this flexibility and richness of kernels has a price: by resorting to implicit computations these methods operate on the Gram matrix of the data, which raises serious computational challenges while 
dealing with large-scale data. In order to resolve this bottleneck, numerous solutions have been proposed, such as low-rank matrix approximations \cite{Williams-01,drineas05nystrom,alaoui15fast}, 
explicit feature maps designed for additive kernels \cite{vedaldi12efficient,maji13efficient}, hashing \cite{shi09hash,kulis12kernelized},
and random Fourier features (RFF) \cite{rahimi07random} constructed for shift-invariant kernels, the focus of the current paper.

RFFs implement an extremely simple, yet efficient idea: instead of relying on the implicit feature map $\lambda$ associated with the kernel, by appealing to 
Bochner's theorem \cite{wendland05scattered}---any bounded, continuous, shift-invariant kernel is the Fourier transform of a probability measure----\cite{rahimi07random} proposed 
an explicit low-dimensional random Fourier feature map $\phi$ obtained by empirically approximating the Fourier integral so that $k(\b{x},\b{y})\approx \langle \phi(\b{x}),\phi(\b{y})\rangle$.
The advantage of this explicit low-dimensional feature representation is that the kernel machine can be efficiently solved in the primal form through fast linear solvers, thereby
enabling to handle large-scale data. Through numerical experiments, it has also been demonstrated that kernel algorithms constructed using the approximate kernel
do not suffer from significant performance degradation \cite{rahimi07random}. Another advantage with the RFF approach is that unlike low rank matrix approximation approach \cite{Williams-01,drineas05nystrom} 
which also speeds up kernel machines, it approximates the entire kernel function and not just the kernel matrix. This property is particularly useful while dealing with out-of-sample
data and also in online learning applications. The RFF technique has found wide applicability in several 
areas such as fast function-to-function regression \cite{oliva15fast}, differential privacy preserving \cite{chaudhuri11differentially} and causal discovery \cite{lopezpaz15towards}.

Despite the success of the RFF method, surprisingly, very little is known about its performance guarantees. To the best of our knowledge, the  only paper in the machine learning literature
providing certain theoretical insight into the accuracy of kernel approximation via RFF is \cite{rahimi07random,sutherland15error}:\footnote{\cite{sutherland15error} derived tighter constants compared to \cite{rahimi07random} and also considered different RFF implementations.} it shows that $A_m:=\sup\{|k(\b{x},\b{y})-\langle \phi(\b{x}),\phi(\b{y})\rangle_{\bb{R}^{2m}}|:\b{x},\b{y}\in\K\}=O_p(\sqrt{\log(m)/m})$
for any compact set $\K\subset\bb{R}^d$, where $m$ is the number of random Fourier features. However, since the approximation proposed by the RFF method involves empirically approximating
the Fourier integral, the RFF estimator can be thought of as an \emph{empirical characteristic function} (ECF). 
In the probability literature, the systematic study of ECF-s was initiated by 
\cite{feuerverger77empirical} and followed up by \cite{csorgo83howlong,csorgo81multivariate,yukich87some}. While \cite{feuerverger77empirical} shows the almost sure (a.s.) convergence of $A_m$ to zero, 
\cite[Theorems 1 and 2]{csorgo83howlong} and \cite[Theorems 6.2 and 6.3]{yukich87some} show that the optimal rate is $m^{-1/2}$. In addition, \cite{feuerverger77empirical} shows that almost sure
convergence cannot be attained over the entire space (i.e., $\bb{R}^d$) if the characteristic function decays to zero at infinity. Due to this, \cite{csorgo83howlong,yukich87some} study the
convergence behavior of $A_m$ when the diameter of $\K$ grows with $m$ and show that almost sure convergence of $A_m$ is guaranteed as long as the diameter of $\K$ is $e^{o(m)}$. Unfortunately,
all these results (to the best of our knowledge) are asymptotic in nature and the only known finite-sample guarantee by \cite{rahimi07random,sutherland15error} is non-optimal. In this paper (see Section~\ref{sec:kernel}), 
we present a finite-sample probabilistic bound for $A_m$ that holds for any $m$ and provides the optimal rate of $m^{-1/2}$ for any compact set $\K$ along with guaranteeing the almost sure convergence of $A_m$ as long as the diameter of $\K$ is $e^{o(m)}$.
Since convergence in uniform norm might sometimes be a too strong requirement and may not be suitable to attain correct rates in the generalization bounds associated with learning algorithms involving RFF,\footnote{For example, in applications like kernel ridge regression based on RFF, it is more appropriate to consider the approximation guarantee in $L^2$ norm than in the uniform norm.} 
we also study the behavior of $k(\b{x},\b{y})-\langle \phi(\b{x}),\phi(\b{y})\rangle_{\bb{R}^{2m}}$ in $L^r$-norm ($1\le r<\infty$) and obtain an optimal rate of $m^{-1/2}$. The RFF approach 
to approximate a translation-invariant kernel can be seen as a special of the problem of approximating a function in the barycenter of a family (say $\Cal{F}$) of functions, which was considered in
\cite{rahimi08uniform}. However, the approximation guarantees in \cite[Theorem 3.2]{rahimi08uniform} do not directly apply to RFF as the assumptions on $\Cal{F}$ are not satisfied by the cosine
function, which is the family of functions that is used to approximate the kernel in the RFF approach. While a careful modification of the proof of \cite[Theorem 3.2]{rahimi08uniform} could yield
$m^{-1/2}$ rate of approximation for any compact set $\K$, this result would still be sub-optimal by providing a linear dependence on $|\K|$ similar to the theorems in \cite{rahimi07random,sutherland15error}, 
in contrast to the optimal logarithmic dependence on $|\K|$ that is guaranteed by our results.

Traditionally, kernel based algorithms involve computing the value of the kernel. Recently, kernel algorithms involving the derivatives of the kernel (i.e., the Gram matrix consists of
derivatives of the kernel computed at training samples) have been used to address numerous machine learning tasks, e.g., 
semi-supervised or Hermite learning with gradient information \cite{zhou08derivative, shi10hermite}, nonlinear variable selection \cite{rosasco10regularization,rosasco13nonparametric}, (multi-task) gradient learning \cite{ying12learning} and
fitting of distributions in an infinite-dimensional exponential family \cite{sriperumbudur14density}. 
Given the importance of these derivative based kernel algorithms, similar to \cite{rahimi07random}, 
in Section~\ref{sec:kernel-derivatives}, we propose a finite dimensional random feature map approximation to
kernel derivatives, which can be used to speed up the above mentioned derivative based kernel algorithms. We present a finite-sample bound that quantifies the quality of approximation in uniform and $L^r$-norms
and show the rate of convergence to be $m^{-1/2}$ in both these cases.

A summary of our \emph{contributions} are as follows. We
\begin{enumerate}[labelindent=0cm,leftmargin=*,topsep=0cm,partopsep=0cm,parsep=0cm,itemsep=0cm]
    \item provide the first detailed finite-sample performance analysis of RFFs for approximating kernels and their derivatives. 
    \item prove uniform and $L^r$ convergence on fixed compacts sets with optimal rate in terms of the RFF dimension ($m$);
    \item give sufficient conditions for the growth rate of compact sets while preserving a.s.\ convergence uniformly and in $L^r$; specializing our result we match the best attainable asymptotic growth rate.
\end{enumerate}
Various notations and definitions that are used throughout the paper are provided in Section~\ref{sec:problem-formulation} along with a brief review of RFF approximation proposed by \cite{rahimi07random}. 
The missing proofs of the results in Sections~\ref{sec:kernel} and \ref{sec:kernel-derivatives} are provided in the supplementary material.

\section{Notations \& preliminaries}\label{sec:problem-formulation}
In this section, we introduce notations that are used throughout the paper and then present preliminaries on kernel approximation
through random feature maps as introduced by \cite{rahimi07random}.\vspace{1.5mm}\\
\tb{Definitions \& Notation:} For a topological
space $\Cal{X}$, $C(\Cal{X})$ (\emph{resp.} $C_b(\Cal{X})$) denotes the space of
all continuous (\emph{resp.} bounded continuous) functions on $\Cal{X}$. For $f\in
C_b(\Cal{X})$, $\Vert f\Vert_\Cal{X}:=\sup_{x\in\Cal{X}}|f(x)|$ is the supremum norm of $f$. $M_b(\Cal{X})$ and $M^1_+(\Cal{X})$ is the set of all finite
Borel and probability measures on $\Cal{X}$, respectively. For $\mu\in M_b(\Cal{X})$, $L^r(\Cal{X},\mu)$
denotes the Banach space of $r$-power ($r\ge
1$) $\mu$-integrable functions. For $\Cal{X}\subseteq\bb{R}^d$, we will use
$L^r(\Cal{X})$ for $L^r(\Cal{X},\mu)$ if $\mu$ is a Lebesgue measure on
$\Cal{X}$. For $f\in L^r(\Cal{X},\mu)$, $\Vert
f\Vert_{L^r(\Cal{X},\mu)}:=\left(\int_\Cal{X}|f|^r\,\d\mu\right)^{1/r}$ denotes
the $L^r$-norm of $f$ for $1\le r<\infty$ and we write it as
$\Vert\cdot\Vert_{L^r(\Cal{X})}$ if $\Cal{X}\subseteq\bb{R}^d$ and $\mu$ is the
Lebesgue measure. For any $f\in L^1(\Cal{X},\bb{P})$ where $\bb{P}\in M^1_+(\Cal{X})$, we define $\bb{P}f:=\int_\Cal{X} f(x)\,\d\bb{P}(x)$ and 
$\bb{P}_mf:=\frac{1}{m}\sum^m_{i=1}f(X_i)$ where $(X_i)^m_{i=1}\stackrel{i.i.d.}{\sim}\bb{P}$, $\bb{P}_m:=\frac{1}{m}\sum^m_{i=1}\delta_{X_i}$ 
is the empirical measure and $\delta_x$ is a Dirac measure supported on $x\in\Cal{X}$.  $\text{supp}(\P)$ denotes the support of $\P$. $\bb{P}^m:=\otimes_{j=1}^m\bb{P}$ denotes the $m$-fold product measure.

For $\b{v}:=(v_1,\ldots,v_d)\in\R^d$, $\Vert \b{v}\Vert_2:=\sqrt{\sum^d_{i=1}v^2_i}$. The diameter of $A\subseteq\Cal{Y}$ where $(\Cal{Y},\rho)$ is 
a metric space is defined as $|A|_\rho:=\sup\{\rho(x,y):x,y\in \Cal{Y}\}$. If $\Cal{Y}=\bb{R}^d$ with $\rho=\Vert\cdot\Vert_2$, we denote the diameter of $A$ as $|A|$; $|A|<\infty$ if $A$ is compact. The volume of $A\subseteq\bb{R}^d$ is defined as
$\text{vol}(A)=\int_A 1\, \d\b{x}$. For $A\subseteq\bb{R}^d$, we define $A_\Delta:=A-A=\{\b{x}-\b{y}:\b{x},\b{y}\in A\}$. 
$conv(A)$ is the convex hull of $A$. 
For a function $g$ 
defined on open set $B\subseteq \bb{R}^d\times \bb{R}^d$,
$\p^{\b{p},\b{q}}g(\b{x},\b{y}) := \frac{\partial^{|\b{p}|+|\b{q}|} g(\b{x},\b{y})}{\partial x_1^{p_1}\cdots\partial x_d^{p_d} \partial y_1^{q_1}\cdots\partial y_d^{q_d}},\,(\b{x},\b{y})\in B$, where 
$\b{p},\b{q}\in\N^d$ are multi-indices, $|\b{p}|=\sum_{j=1}^dp_j$ and $\bb{N}:=\{0,1,2,\ldots\}$. Define $\b{v}^{\b{p}}=\prod_{j=1}^d v_j^{p_j}$. For positive sequences $(a_n)_{n\in \N}$, $(b_n)_{n\in \N}$, $a_n = \o(b_n)$ if $\lim_{n\rightarrow \infty}\frac{a_n}{b_n} = 0$. 
$X_n=\O_{p}(r_n)$ (\emph{resp.} 
$\O_{a.s.}(r_n)$) denotes that $\frac{X_n}{r_n}$ is bounded in probability (\emph{resp.} almost surely). $\Gamma(t)=\int_0^{\infty}x^{t-1}e^{-x}\,\d x$ is the Gamma function, $\Gamma\left(\frac{1}{2}\right) = \sqrt{\pi}$ and $\Gamma(t+1)=t\Gamma(t)$.\vspace{1.5mm}\\
\tb{Random feature maps:} Let $k:\bb{R}^d\times\bb{R}^d\rightarrow\bb{R}$ be a bounded, continuous, positive definite, translation-invariant kernel, i.e., there exists a positive definite function $\psi:\bb{R}^d\rightarrow\bb{R}$ such that
$k(\b{x},\b{y})=\psi(\b{x}-\b{y}),\,\b{x},\b{y}\in\bb{R}^d$ where $\psi\in C_b(\bb{R}^d)$. By Bochner's theorem \cite[Theorem 6.6]{wendland05scattered}, $\psi$ can be represented 
as the Fourier transform of a finite non-negative Borel measure $\Lambda$ on $\bb{R}^d$, i.e., 
\begin{equation}
k(\b{x},\b{y}) = \psi(\b{x}-\b{y}) = \int_{\R^d} e^{\sqrt{-1}\bo^T(\b{x}-\b{y})}\d\S(\bo) \stackrel{(\star)}{=} \int_{\R^d} \cos\left(\bo^T(\b{x}-\b{y})\right)\d\S(\bo),\label{eq:Bochner0} 
\end{equation}
where $(\star)$ follows from the fact that $\psi$ is real-valued and symmetric. Since $\S(\bb{R}^d)=\psi(0)$, $k(\b{x},\b{y})=\psi(0)\int e^{\sqrt{-1}\bo^T(\b{x}-\b{y})}\,\d\bb{P}(\bo)$ where $\bb{P}:=\frac{\S}{\psi(0)}\in M^1_+(\bb{R}^d)$. Therefore, w.l.o.g., we 
assume throughout the paper that $\psi(0)=1$ and so $\S\in M^1_+(\bb{R}^d)$. Based on (\ref{eq:Bochner0}), \cite{rahimi07random} proposed an approximation to $k$ by replacing $\S$ with its empirical measure, $\S_m$
constructed from $(\bo_i)^m_{i=1}\stackrel{i.i.d.}{\sim}\S$ so that resultant approximation can be written as the Euclidean inner product of finite dimensional random feature maps, i.e.,
\begin{equation}
\hat{k}(\b{x},\b{y})=\frac{1}{m}\sum^m_{i=1}\cos\left(\bo^T_i(\b{x}-\b{y})\right)\stackrel{(*)}{=}\left\langle \phi(\b{x}),\phi(\b{y})\right\rangle_{\bb{R}^{2m}},\label{eq:kernel-approx}
\end{equation}
where $\phi(\b{x})=\frac{1}{\sqrt{m}}(\cos(\bo^T_1\b{x}),\ldots,\cos(\bo^T_m\b{x}),\sin(\bo^T_1\b{x}),\ldots,\sin(\bo^T_m\b{x}))$
and $(*)$ holds based on the basic trigonometric identity: $\cos(a-b)=\cos a\cos b+\sin a\sin b$. This elegant approximation to $k$ is particularly useful in speeding up kernel-based
algorithms as the finite-dimensional random feature map $\phi$ can be used to solve these algorithms in the primal thereby offering better computational complexity (than by solving them in the dual)
while at the same time not lacking in performance. 
Apart from these practical advantages, \cite[Claim 1]{rahimi07random} (and similarly, \cite[Prop.~1]{sutherland15error}) provides a theoretical guarantee that $\|\hat{k}-k\|_{\K\times \K}\rightarrow 0$ as $m\rightarrow\infty$ for
any compact set $\K\subset\bb{R}^d$. Formally, \cite[Claim 1]{rahimi07random} showed that---note that (\ref{eq:rr-orig}) is slightly different but more precise than the one in the statement of Claim 1 in \cite{rahimi07random}---for any $\epsilon>0$,
\begin{equation}
\S^m\left(\left\{(\bo_i)^m_{i=1}:  \|\hat{k}-k\|_{\K\times \K}\ge\epsilon\right\}\right)\le C_d\left(\vert\K\vert \sigma \epsilon^{-1}\right)^{\frac{2d}{d+2}}e^{-\frac{m\epsilon^2}{4(d+2)}},
\label{eq:rr-orig}
\end{equation}
where $\sigma^2:=\int \Vert\bo\Vert^2\,d\Lambda(\bo)$ and $C_d:=2^{\frac{6d+2}{d+2}}\left(\left(\frac{2}{d}\right)^{\frac{d}{d+2}}+\left(\frac{d}{2}\right)^{\frac{2}{d+2}}\right)\le 2^7d^{\frac{2}{d+2}}$ when $d\ge 2$.
The condition $\sigma^2<\infty$ implies that $\psi$ (and therefore $k$) is twice differentiable.
From (\ref{eq:rr-orig}) it is clear that the probability has polynomial tails
if $\epsilon<|\K|\sigma$ (i.e., small $\epsilon$) and Gaussian tails if $\epsilon\ge |\K|\sigma$ (i.e., large $\epsilon$) and can be equivalently written as
\begin{equation}
\Lambda^m\left(\left\{(\bo_i)^m_{i=1}: \|\hat{k}-k\|_{\K\times \K}\ge C^\frac{d+2}{2d}_d|\K|\sigma\sqrt{m^{-1}\log m}\right\}\right)\le m^{\frac{\alpha}{4(d+2)}}(\log m)^{-\frac{d}{d+2}},\label{eq:rr}
\end{equation}
where $\alpha:=4d-C^\frac{d+2}{d}_d|\K|^2\sigma^2$. For $|\K|$ sufficiently large (i.e., $\alpha<0$), it follows from (\ref{eq:rr}) that
\begin{equation}
\|\hat{k}-k\|_{\K\times \K}=\O_{p}\left(|\K|\sqrt{m^{-1}\log m}\right).\label{eq:rate-rr}
\end{equation}
While (\ref{eq:rate-rr}) shows that $\hat{k}$ is a consistent estimator of $k$ in the topology of compact convergence (i.e., $\hat{k}$ convergences to $k$ uniformly over compact sets), the
rate of convergence of $\sqrt{(\log m)/m}$ is not optimal. In addition, the order of dependence on $|\K|$ is not optimal. While a faster rate (in fact, an optimal rate) of 
convergence is desired---better rates in (\ref{eq:rate-rr}) can lead to better convergence rates for the excess error of the kernel machine constructed using $\hat{k}$---, the order of dependence on $|\K|$ is also important as
it determines the the number of RFF features (i.e., $m$) that are needed to achieve a given approximation accuracy. In fact, the order of dependence on $|\K|$ controls the rate at which $|\K|$ can be grown as a function of $m$ when $m\rightarrow\infty$
(see Remark~\ref{rem:optim}\emph{(ii)} for a detailed discussion about the significance of growing $|\K|$). 
In the following section, we present an analogue of (\ref{eq:rr})---see Theorem~\ref{Thm:conc}---that provides
optimal rates and has correct dependence on $|\K|$.

\section{Main results: approximation of $k$}\label{sec:kernel}
As discussed in Sections~\ref{sec:intro} and \ref{sec:problem-formulation}, while the random feature map approximation of $k$ introduced by \cite{rahimi07random} has many 
practical advantages, it does not seem to be theoretically well-understood. The existing theoretical results on the quality of approximation do not provide a complete picture owing 
to their non-optimality. In this section, we first present our main result (see Theorem~\ref{Thm:conc}) that improves upon (\ref{eq:rr}) and provides a rate of $m^{-1/2}$ with logarithm dependence on $|\K|$. We then
discuss the consequences of Theorem~\ref{Thm:conc} along with its optimality in Remark~\ref{rem:optim}. 
Next, in Corollary~\ref{cor:lr} and Theorem~\ref{Thm:Lr}, we discuss the $L^r$-convergence ($1\le r<\infty$) of $\hat{k}$ to $k$ over compact subsets of $\bb{R}^d$. 
\begin{theorem}\label{Thm:conc}
Suppose $k(\b{x},\b{y})=\psi(\b{x}-\b{y}),\,\b{x},\,\b{y}\in\bb{R}^d$ where $\psi\in C_b(\bb{R}^d)$ is positive definite and $\sigma^2:=\int \Vert\bo\Vert^2\,\d\Lambda(\bo)<\infty$. 
Then for any $\tau>0$ and non-empty compact set $\K\subset\bb{R}^d$,
$$\S^m\left(\left\{(\bo_i)^m_{i=1}: \|\hat{k}-k\|_{\K\times \K}\ge\frac{h(d,|\K|,\sigma)+\sqrt{2\tau}}{\sqrt{m}}\right\}\right)\le e^{-\tau},$$
where $h(d,|\K|,\sigma):=32\sqrt{2d\log(2|\K|+1)}+32\sqrt{2d\log(\sigma+1)}+16\sqrt{2d[\log(2|\K|+1)]^{-1}}$.\vspace{-2mm}
\end{theorem}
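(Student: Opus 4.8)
The plan is to read $\|\hat k-k\|_{\K\times\K}$ as the supremum of a centered empirical process and to control it by two independent ingredients: McDiarmid's bounded-differences inequality for the fluctuation around the mean, which produces the $\sqrt{2\tau}$ term, and a chaining (Dudley entropy integral) argument for the mean itself, which produces $h(d,|\K|,\sigma)$. First I would pass to the difference set. Since both $\hat k$ and $k$ depend on $(\b x,\b y)$ only through $\b z:=\b x-\b y$, setting $g_{\b z}(\bo):=\cos(\bo^T\b z)$ gives $\hat k(\b x,\b y)-k(\b x,\b y)=(\S_m-\S)g_{\b z}$, so that
$$\|\hat k-k\|_{\K\times\K}=\sup_{\b z\in\K_\Delta}\left|(\S_m-\S)g_{\b z}\right|=:Z,$$
where $\K_\Delta=\K-\K$ is compact with diameter $|\K_\Delta|=2|\K|$. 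Everything is now an empirical process indexed by the uniformly bounded class $\G:=\{g_{\b z}:\b z\in\K_\Delta\}$, $\|g_{\b z}\|_\infty\le 1$.

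For the concentration step, each $\bo_i$ enters $Z$ only through the summand $\frac1m\cos(\bo_i^T\b z)$, of modulus at most $1/m$, so replacing a single $\bo_i$ perturbs $Z$ by at most $2/m$. McDiarmid's inequality with $c_i=2/m$ (hence $\sum_i c_i^2=4/m$) yields $\S^m(Z\ge\E Z+t)\le e^{-mt^2/2}$; taking $t=\sqrt{2\tau/m}$ makes the right-hand side $e^{-\tau}$ and supplies exactly the $\sqrt{2\tau}/\sqrt m$ summand. It remains to show $\E Z\le h(d,|\K|,\sigma)/\sqrt m$.

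For the mean I would symmetrize, bounding $\E Z$ by twice the empirical Rademacher complexity $\Rad(\G)$, and apply Dudley's entropy integral conditionally on $(\bo_i)_{i=1}^m$:
$$\E Z\le\frac{C}{\sqrt m}\,\E_{\bo}\!\int_0^{R}\sqrt{\log N\!\left(\varepsilon,\G,\|\cdot\|_{L^2(\S_m)}\right)}\,\d\varepsilon .$$
The decisive structural point---and the reason the dependence on $|\K|$ drops from linear in \eqref{eq:rr} to logarithmic---is that $\G$ is uniformly bounded, so its $L^2(\S_m)$-radius obeys $R\le 1$ and the upper limit of the integral does not grow with $|\K|$. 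For the covering numbers I would use $|g_{\b z}(\bo)-g_{\b z'}(\bo)|\le\|\bo\|\,\|\b z-\b z'\|$, giving $\|g_{\b z}-g_{\b z'}\|_{L^2(\S_m)}\le\hat\sigma_m\|\b z-\b z'\|$ with $\hat\sigma_m:=(\frac1m\sum_i\|\bo_i\|^2)^{1/2}$, whence $N(\varepsilon,\G,\|\cdot\|_{L^2(\S_m)})\le N(\varepsilon/\hat\sigma_m,\K_\Delta,\|\cdot\|_2)\le(1+2|\K|\hat\sigma_m/\varepsilon)^d$. Splitting $\log(1+2|\K|\hat\sigma_m/\varepsilon)\le\log(1+2|\K|)+\log(1+\hat\sigma_m)+\log(1/\varepsilon)$ (valid for $\varepsilon\le 1$) and using $\int_0^1\sqrt{\log(1/\varepsilon)}\,\d\varepsilon=\Gamma(3/2)=\sqrt\pi/2$ turns the integral into a $\sqrt{d\log(2|\K|+1)}$ term, a $\sqrt{d\log(\hat\sigma_m+1)}$ term, and a lower-order constant; this exactly reproduces the $2|\K|+1$ and $\sigma+1$ shape of $h$. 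Finally I would pass from $\hat\sigma_m$ to $\sigma$ under the expectation via Jensen: $\E\sqrt{\log(1+\hat\sigma_m)}\le\sqrt{\log(1+\E\hat\sigma_m)}\le\sqrt{\log(1+\sigma)}$, since $\E\hat\sigma_m\le(\E\hat\sigma_m^2)^{1/2}=\sigma$.

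The main obstacle is the chaining step itself: a single-scale discretization with a union bound would reinject a spurious $\sqrt{\log m}$ factor together with a linear dependence on $|\K|$---precisely the defect of \eqref{eq:rate-rr}---so obtaining the optimal $m^{-1/2}$ rate with merely logarithmic $|\K|$-dependence hinges on carrying out the full Dudley integral and on the uniform boundedness of $\G$ that caps its upper limit at $R\le 1$. The remaining delicate bookkeeping is keeping the \emph{random} empirical Lipschitz scale $\hat\sigma_m$ inside the entropy integral and converting it to the deterministic $\sigma$ with clean constants; tracking the universal factors through symmetrization, Dudley's theorem, and the residual $\Gamma(3/2)$ contribution is what pins down the exact numerical form of $h$, including its lower-order term.
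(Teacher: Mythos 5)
Your proposal follows essentially the same route as the paper's proof: reduction to an empirical process indexed by $\{\cos(\bo^T\b{z}):\b{z}\in\K_\Delta\}$, McDiarmid with increments $c_i=2/m$ to produce the $\sqrt{2\tau/m}$ term, symmetrization to a Rademacher average, Dudley's entropy integral whose upper limit stays $O(1)$ by uniform boundedness, the Lipschitz covering bound $\CN(\G,L^2(\S_m),r)\le\CN\left(\K_\Delta,\Vert\cdot\Vert_2,r/\hat\sigma_m\right)$, and repeated Jensen to replace the empirical moment $\hat\sigma_m$ by $\sigma$. The only substantive divergence is bookkeeping of the lower-order term: your subadditive split $\sqrt{\log(a/\varepsilon)}\le\sqrt{\log a}+\sqrt{\log(1/\varepsilon)}$ together with $\int_0^1\sqrt{\log(1/\varepsilon)}\,\d\varepsilon=\sqrt{\pi}/2$ yields an additive term proportional to $\sqrt{d}$, whereas the stated third term of $h$, namely $16\sqrt{2d[\log(2|\K|+1)]^{-1}}$, comes from the paper's sharper integral estimate $\int_0^1\sqrt{\log(a/\varepsilon)}\,\d\varepsilon\le\sqrt{\log a}+\frac{1}{2\sqrt{\log a}}$ (Lemma~\ref{lemma:integral}, proved by partial integration), applied with $a=2|\K|+1$ after the factorization $2|\K|\hat\sigma_m+1\le(2|\K|+1)(\hat\sigma_m+1)$. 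Since your constant term exceeds the paper's term once $|\K|$ is moderately large, your argument as written proves the bound with a slightly different $h$ rather than the exact one stated; substituting the integration-by-parts lemma for your split recovers the stated constants. One technicality you omit, which the paper handles explicitly, is measurability of the supremum over the uncountable index set $\K_\Delta$, verified via the separable Carath\'eodory property of $\G$.
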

\begin{proof}[Proof (sketch)]
Note that $\|\hat{k}-k\|_{\K\times \K} = \sup_{\b{x},\b{y}\in\K}|\hat{k}(\b{x},\b{y})-k(\b{x},\b{y})|=\sup_{g\in\Cal{G}}|\S_mg-\S g|$, where $\Cal{G}:=\{g_{\b{x},\b{y}}(\bo)=\cos(\bo^T(\b{x}-\b{y}))\,:\,\b{x},\b{y}\in\K\}$, which means the object of interest is the suprema of an empirical process indexed by $\Cal{G}$. 
Instead of bounding $\sup_{g\in\Cal{G}}|\S_mg-\S g|$ by using Hoeffding's inequality on a cover of $\Cal{G}$ and then applying union bound as carried out in \cite{rahimi07random,sutherland15error}, we use the 
refined technique of applying concentration via McDiarmid's inequality, followed by symmetrization and bound the Rademacher average by Dudley entropy bound. The result is obtained 
by carefully bounding the $L^2(\S_m)$-covering
number of $\Cal{G}$. The details are provided in Section~\ref{sec:proof:derivatives:suppS-bounded} of the supplementary material.
\end{proof}
\begin{remark}\label{rem:optim}
    \emph{(i)} Theorem~\ref{Thm:conc} shows that $\hat{k}$ is a consistent estimator of $k$ in the topology of compact convergence as $m\rightarrow\infty$ with the rate of a.s.~convergence being
	  $\sqrt{m^{-1}\log|\K|}$ (almost sure convergence is guaranteed by the first Borel-Cantelli lemma). In comparison to (\ref{eq:rr}), it is clear that Theorem~\ref{Thm:conc} provides improved rates with better constants and logarithmic dependence on $|\K|$ instead of a linear
	  dependence. The logarithmic dependence on $|\K|$ ensures that we need $m=O(\epsilon^{-2}\log|\K|)$ random features instead of $O(\epsilon^{-2}|\K|^{2}\log(|\K|/\epsilon))$ random features, i.e., significantly 
	  fewer features to achieve the same approximation accuracy of $\epsilon$.\vspace{2.5mm}\\
     \emph{(ii)} \textbf{Growing diameter:} While Theorem~\ref{Thm:conc} provides almost sure convergence uniformly over compact sets, one might wonder whether it is possible to achieve uniform 
	  convergence over $\bb{R}^d$. \cite[Section 2]{feuerverger77empirical} showed that such a result is possible if $\S$ is a discrete measure but not possible for $\S$ that is absolutely continuous w.r.t.~the Lebesgue measure (i.e., if $\S$ has a 
	  density). Since uniform convergence of $\hat{k}$ to $k$ over $\bb{R}^d$ is not possible for many interesting $k$ (e.g., Gaussian kernel), it is of interest to study the convergence on $\K$ whose diameter grows with $m$. Therefore,
	  as mentioned in Section~\ref{sec:problem-formulation}, the order of dependence of rates on $|\K|$ is critical. Suppose $|\K_m|\rightarrow\infty$ as $m\rightarrow\infty$ (we write $|\K_m|$ instead of $|\K|$ to show the explicit dependence on $m$). Then Theorem~\ref{Thm:conc} shows that
	  $\hat{k}$ is a consistent estimator of $k$ in the topology of compact convergence if $m^{-1}\log|\K_m|\rightarrow 0$ as $m\rightarrow\infty$ (i.e., $|\K_m|=e^{o(m)}$) in contrast to
	  the result in (\ref{eq:rr}) which requires $|\K_m|=o(\sqrt{m/\log m})$. In other words, Theorem~\ref{Thm:conc} ensures consistency even when $|\K_m|$ grows exponentially in $m$ whereas (\ref{eq:rr})
	  ensures consistency only if $|\K_m|$ does not grow faster than $\sqrt{m/\log m}$.\vspace{2.5mm}\\
     \emph{(iii)} \textbf{Optimality:} Note that $\psi$ is the characteristic function of $\S\in M^1_+(\bb{R}^d)$ since $\psi$ is the Fourier transform of $\S$ (by Bochner's theorem). Therefore, the object of interest $\|\hat{k}-k\|_{\K\times\K}=\Vert\hat{\psi}-\psi\Vert_{\K_\Delta}$, 
	  is the uniform norm of the difference between $\psi$ and the empirical characteristic function $\hat{\psi}=\frac{1}{m}\sum^m_{i=1}\cos(\langle\bo_i,\cdot\rangle)$, when both are 
	  restricted to a compact set $\K_\Delta\subset\bb{R}^d$. The question of the convergence behavior of $\Vert\hat{\psi}-\psi\Vert_{\K_\Delta}$ is not new and has been studied in great
	  detail in the probability and statistics literature (e.g., see \cite{feuerverger77empirical,yukich87some} for $d=1$ and \cite{csorgo81multivariate,csorgo83howlong} for $d>1$) where the characteristic function is not just a real-valued symmetric function (like $\psi$) 
	  but is Hermitian. \cite[Theorems 6.2 and 6.3]{yukich87some} show that the optimal rate of convergence of $\Vert\hat{\psi}-\psi\Vert_{\K_\Delta}$ is $m^{-1/2}$ when $d=1$, which matches
	  with our result in Theorem~\ref{Thm:conc}. Also Theorems 1 and 2 in \cite{csorgo83howlong} show that
	  the logarithmic dependence on $|\K_m|$ is optimal asymptotically. In particular, \cite[Theorem 1]{csorgo83howlong} matches with the growing diameter result in Remark~\ref{rem:optim}\emph{(ii)}, while
	  \cite[Theorem 2]{csorgo83howlong} shows that if $\Lambda$ is absolutely continuous w.r.t.~the Lebesgue measure and if $\lim\sup_{m\rightarrow\infty} m^{-1}\log|\K_m|>0$,
	  then there exists a positive $\varepsilon$ such that $\lim\sup_{m\rightarrow\infty}\S^m(\Vert \hat{\psi}-\psi\Vert_{\K_{m,\Delta}}\ge\varepsilon)>0$. This means the rate $|\K_m|=e^{o(m)}$
	  is not only the best possible in general for almost sure convergence, but if faster sequence $|\K_m|$ is considered then even stochastic convergence cannot be retained for any characteristic function
	  vanishing at infinity along at least one path. While these previous results match with that of Theorem~\ref{Thm:conc} (and its consequences), we would like to highlight the fact that
	  all these previous results are asymptotic in nature whereas Theorem~\ref{Thm:conc} provides a finite-sample probabilistic inequality that holds for any $m$. We are not aware of any 
	  such finite-sample result except for the one in \cite{rahimi07random,sutherland15error}.\null\hfill$\blacksquare$
\end{remark}
Using Theorem~\ref{Thm:conc}, one can obtain a probabilistic inequality for the $L^r$-norm of $\hat{k}-k$ over any compact set $\K\subset\bb{R}^d$, as given by
the following result.
\begin{corollary}\label{cor:lr}
Suppose $k$ satisfies the assumptions in Theorem~\ref{Thm:conc}. Then for any $1\le r<\infty$, $\tau>0$ and non-empty compact set $\K\subset\bb{R}^d$,
$$\S^m\left(\left\{(\bo_i)^m_{i=1}:\Vert \hat{k}-k\Vert_{L^r(\K)}\ge\left(\frac{\pi^{d/2}|\K|^d}{2^d\Gamma(\frac{d}{2}+1)}\right)^{2/r}\frac{h(d,|\K|,\sigma)+\sqrt{2\tau}}{\sqrt{m}}\right\}\right)\le e^{-\tau},$$
where $\Vert \hat{k}-k\Vert_{L^r(\K)}:=\Vert \hat{k}-k\Vert_{L^r(\K\times\K)}=\left(\int_{\K}\int_{\K}|\hat{k}(\b{x},\b{y})-k(\b{x},\b{y})|^r\,\d\b{x}\,\d\b{y}\right)^{\frac{1}{r}}.$
\end{corollary}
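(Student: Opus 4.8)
The plan is to deduce the $L^r$ statement from the uniform bound of Theorem~\ref{Thm:conc} by a purely deterministic argument, paying only a volume factor, and then transferring the probability estimate through a containment of events. First I would bound the integrand pointwise by the supremum norm: for every realization of the frequencies $(\bo_i)^m_{i=1}$,
$$\Vert\hat k-k\Vert_{L^r(\K\times\K)}^r=\int_\K\int_\K|\hat k(\b x,\b y)-k(\b x,\b y)|^r\,\d\b x\,\d\b y\le \|\hat k-k\|_{\K\times\K}^r\,\mathrm{vol}(\K\times\K),$$
and since $\mathrm{vol}(\K\times\K)=\mathrm{vol}(\K)^2$, taking $r$-th roots yields the clean deterministic inequality
$$\Vert\hat k-k\Vert_{L^r(\K\times\K)}\le \mathrm{vol}(\K)^{2/r}\,\|\hat k-k\|_{\K\times\K}.$$

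The next step is to express $\mathrm{vol}(\K)$ in terms of the diameter $|\K|$ alone, so that the prefactor matches the one in the statement. Here I would invoke the isodiametric inequality: among all Borel sets of a prescribed diameter, the Euclidean ball of that diameter maximizes the volume. Since $\K$ has diameter $|\K|$, this gives that $\mathrm{vol}(\K)$ is at most the volume of a ball of radius $|\K|/2$, i.e.
$$\mathrm{vol}(\K)\le \frac{\pi^{d/2}}{\Gamma(\frac d2+1)}\left(\frac{|\K|}{2}\right)^{d}=\frac{\pi^{d/2}|\K|^d}{2^d\,\Gamma(\frac d2+1)},$$
which is precisely the quantity appearing (raised to the power $2/r$) in the corollary. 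It is worth noting that the sharp isodiametric constant is exactly what makes the stated prefactor correct: a cruder enclosing-ball argument would only place $\K$ inside a ball of radius $|\K|$ and hence lose a factor $2^d$. Substituting this bound into the previous display gives, for every realization,
$$\Vert\hat k-k\Vert_{L^r(\K)}\le\left(\frac{\pi^{d/2}|\K|^d}{2^d\,\Gamma(\frac d2+1)}\right)^{2/r}\|\hat k-k\|_{\K\times\K}.$$

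Finally I would transfer the probabilistic guarantee by event inclusion. The deterministic inequality above shows that
$$\left\{\Vert\hat k-k\Vert_{L^r(\K)}\ge\left(\frac{\pi^{d/2}|\K|^d}{2^d\Gamma(\frac d2+1)}\right)^{2/r}\frac{h(d,|\K|,\sigma)+\sqrt{2\tau}}{\sqrt m}\right\}\subseteq\left\{\|\hat k-k\|_{\K\times\K}\ge\frac{h(d,|\K|,\sigma)+\sqrt{2\tau}}{\sqrt m}\right\},$$
so by monotonicity of $\S^m$ and Theorem~\ref{Thm:conc} the probability of the left-hand event is at most $e^{-\tau}$, which is the claim. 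No genuine obstacle arises: the only non-routine ingredient is the sharp volume-versus-diameter bound supplied by the isodiametric inequality, and everything else is the elementary sup-to-$L^r$ domination together with a containment of events. In particular, all the randomness—and hence the rate $m^{-1/2}$ and the $h(d,|\K|,\sigma)$ dependence—is inherited verbatim from Theorem~\ref{Thm:conc}, so the finite-sample, any-$m$ character of the bound is preserved.
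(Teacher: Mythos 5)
Your proposal is correct and follows essentially the same route as the paper: bound $\Vert \hat{k}-k\Vert_{L^r(\K)}$ by $\mathrm{vol}^{2/r}(\K)\,\|\hat{k}-k\|_{\K\times\K}$, bound $\mathrm{vol}(\K)$ by the volume of a ball of radius $|\K|/2$, and transfer the probability bound from Theorem~\ref{Thm:conc} by event inclusion. The only difference is expository: you explicitly name the isodiametric inequality (and note why the sharp constant matters), whereas the paper states the volume comparison $\mathrm{vol}(\K)\le\mathrm{vol}(A)$ without labeling it, citing Folland only for the ball-volume formula.
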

\begin{proof}
 Note that $$\Vert \hat{k}-k\Vert_{L^r(\K)}\le \Vert \hat{k}-k\Vert_{\K\times\K} \text{vol}^{2/r}(\K).$$ The result follows by combining Theorem~\ref{Thm:conc} and the fact that $\text{vol}(\K)\le \text{vol}(A)$
where $A:=\left\{\b{x}\in\bb{R}^d:\Vert\b{x}\Vert_2\le \frac{|\K|}{2}\right\}$ and $\text{vol}(A)=\frac{\pi^{d/2}|\K|^d}{2^d\Gamma\left(\frac{d}{2}+1\right)}$ (which follows from \cite[Corollary 2.55]{folland99real}).
\end{proof}
Corollary~\ref{cor:lr} shows that $\Vert \hat{k}-k\Vert_{L^r(\K)}=O_{a.s.}(m^{-1/2}|\K|^{2d/r}\sqrt{\log|\K|})$ and therefore if $|\K_m|\rightarrow\infty$ as $m\rightarrow\infty$,
then consistency of $\hat{k}$ in $L^r(\K_m)$-norm is achieved as long as $m^{-1/2}|\K_m|^{2d/r}\sqrt{\log|\K_m|}\rightarrow 0$ as $m\rightarrow\infty$. This means, in comparison to the uniform
norm in Theorem~\ref{Thm:conc} where $|\K_m|$ can grow exponential in $m^\delta$ ($\delta<1$), $|\K_m|$ cannot grow faster than $m^{\frac{r}{4d}}(\log m)^{-\frac{r}{4d}-\theta}$ ($\theta>0$) to achieve consistency in $L^r$-norm. 

Instead of using Theorem~\ref{Thm:conc}
to obtain a bound on $\Vert \hat{k}-k\Vert_{L^r(\K)}$ (this bound may be weak as $\Vert \hat{k}-k\Vert_{L^r(\K)}\le \Vert \hat{k}-k\Vert_{\K\times \K}\text{vol}^{2/r}(\K)$ for any $1\le r<\infty$), a better
bound (for $2\le r<\infty$) can be obtained by directly
bounding $\Vert \hat{k}-k\Vert_{L^r(\K)}$, as shown in the following result.
\begin{theorem}\label{Thm:Lr}
Suppose $k(\b{x},\b{y})=\psi(\b{x}-\b{y}),\,\b{x},\,\b{y}\in\bb{R}^d$ where $\psi\in C_b(\bb{R}^d)$ is positive definite. Then for any $1< r<\infty$, $\tau>0$ and non-empty compact set $\K\subset\bb{R}^d$,
$$\S^m\left(\left\{(\bo_i)^m_{i=1}:\Vert \hat{k}-k\Vert_{L^r(\K)}\ge\left(\frac{\pi^{d/2}|\K|^d}{2^d\Gamma(\frac{d}{2}+1)}\right)^{2/r}\left(\frac{C^\prime_{r}}{m^{1-\max\{\frac{1}{2},\frac{1}{r}\}}}+\frac{\sqrt{2\tau}}{\sqrt{m}}\right)\right\}\right)\le e^{-\tau},$$
where $C^\prime_{r}$ is the Khintchine constant given by $C^\prime_r=1$ for $r\in(1,2]$ and $C^\prime_r = \sqrt{2} \left[ \Gamma\left(\frac{r+1}{2}\right) / \sqrt{\pi} \right]^{\frac{1}{r}}$ for $r\in[2,\infty)$.\vspace{-1mm}
\end{theorem}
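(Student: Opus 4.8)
The plan is to split the target quantity into a fluctuation part and an expectation part. Writing $\Phi := \|\hat{k}-k\|_{L^r(\K)}$, I would bound $\Phi \le \E\Phi + (\Phi - \E\Phi)$ and control the second term by a bounded-differences argument applied to $\Phi$ as a function of the i.i.d. frequencies $(\bo_i)_{i=1}^m$. Replacing a single $\bo_j$ changes $\hat{k}$ only through the $j$-th summand $\frac1m\cos(\bo_j^T(\b{x}-\b{y}))$, which is bounded by $\frac2m$ in absolute value, so by the reverse triangle inequality in $L^r(\K\times\K)$ the map $\Phi$ has bounded differences $\frac2m\,\text{vol}^{2/r}(\K)$. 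McDiarmid's inequality then yields $\S^m(\Phi - \E\Phi \ge t) \le e^{-\tau}$ with $t = \text{vol}^{2/r}(\K)\sqrt{2\tau/m}$, which is precisely the $\frac{\sqrt{2\tau}}{\sqrt m}$ term once $\text{vol}(\K)$ is replaced by the volume of the enclosing ball of radius $|\K|/2$, exactly as in the proof of Corollary~\ref{cor:lr}.

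It remains to bound $\E\Phi$. First I would use Jensen's inequality together with Tonelli's theorem to pass to pointwise $r$-th moments:
\[
\E\Phi \le (\E\Phi^r)^{1/r} = \left(\int_\K\int_\K \E\big|\hat{k}(\b{x},\b{y}) - k(\b{x},\b{y})\big|^r\,\d\b{x}\,\d\b{y}\right)^{1/r} \le \text{vol}^{2/r}(\K)\,\sup_{\b{x},\b{y}\in\K}\Big(\E\big|\hat{k}(\b{x},\b{y}) - k(\b{x},\b{y})\big|^r\Big)^{1/r}.
\]
Fixing $\b{z} = \b{x}-\b{y}$, the inner quantity is the $r$-th moment of the normalized i.i.d. sum $\frac1m\sum_{i=1}^m\xi_i$ with $\xi_i := \cos(\bo_i^T\b{z}) - \psi(\b{z})$, which are mean-zero and satisfy $|\xi_i|\le 2$ and $\text{Var}(\xi_i) = \text{Var}(\cos(\bo^T\b{z}))\le 1$.

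The core estimate is thus the uniform (in $\b{z}$) bound $\big\|\frac1m\sum_i\xi_i\big\|_r \le C^\prime_r\, m^{-(1-\max\{1/2,1/r\})}$. For $1 < r \le 2$ this is the easy regime: monotonicity of $L^r$-norms gives $\|\frac1m\sum_i\xi_i\|_r \le \|\frac1m\sum_i\xi_i\|_2 = m^{-1}(\sum_i\text{Var}(\xi_i))^{1/2} \le m^{-1/2} \le m^{-(1-1/r)}$, with $C^\prime_r = 1$. The delicate regime is $r \ge 2$, where the target is the sub-Gaussian rate $m^{-1/2}$ carrying the \emph{sharp} Khintchine constant $C^\prime_r$, which coincides with the $L^r$-norm of a standard Gaussian. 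The natural route is to symmetrize against an independent copy, reduce to a Rademacher sum, and apply the Rademacher Khintchine inequality conditionally; the part I expect to be hardest is recovering the \emph{optimal} constant rather than one inflated by the symmetrization step and by the crude pointwise bound on the quadratic variation $\sum_i(\cos(\bo_i^T\b{z}) - \cos({\bo_i'}^T\b{z}))^2$. Resolving this requires exploiting that each summand is a mean-zero function valued in a length-$2$ interval (the cosine lies in $[-1,1]$) with variance at most one, so that the normalized sum is dominated in the relevant moments by a Gaussian of matching variance; this comparison is what yields $C^\prime_r$ exactly.

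Finally, taking the union of the two events and using $\text{vol}(\K) \le \pi^{d/2}|\K|^d/(2^d\Gamma(\tfrac{d}{2}+1))$ gives the stated inequality, with the rate exponent $1-\max\{1/2,1/r\}$ and the constant $C^\prime_r$ entering entirely through the pointwise moment step. I expect the entire difficulty to be isolated in that pointwise estimate for $r \ge 2$: obtaining the $m^{-1/2}$ rate with the \emph{sharp} Gaussian moment constant, as opposed to a rate or constant degraded by symmetrization, is the only nontrivial point, while the concentration, the Jensen/Tonelli reduction, and the volume estimate are routine.
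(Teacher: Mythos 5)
Your first step coincides with the paper's: the map $(\bo_i)_{i=1}^m\mapsto\Vert\hat{k}-k\Vert_{L^r(\K)}$ has bounded differences $\frac{2}{m}\mathrm{vol}^{2/r}(\K)$, and McDiarmid yields the $\sqrt{2\tau}/\sqrt{m}$ term. For the expectation you then depart genuinely from the paper: the paper never leaves the Banach space $L^r(\K)$ --- it writes the norm as a countable supremum via $L^r(\K)=[L^{\tilde{r}}(\K)]^*$, applies vector-valued symmetrization, and invokes the fact that $L^r(\K)$ has type $\min\{r,2\}$ with a $\K$-independent constant, which is where $C^\prime_r$ enters --- whereas you reduce by Jensen and Tonelli to a pointwise scalar moment bound for $\frac{1}{m}\sum_i\xi_i$ with $\xi_i:=\cos(\bo_i^T\b{z})-\psi(\b{z})$. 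For $1<r\le 2$ your argument is complete and even sharper than required: monotonicity of moments on a probability space gives the rate $m^{-1/2}$ with constant $1$, which implies the stated $m^{-(1-1/r)}$.

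For $r\ge 2$, however, the step that carries the whole theorem is asserted rather than proved, and in the form you state it, it is false. You claim the normalized sum is dominated in $L^r$ by ``a Gaussian of matching variance''. Take $m=1$, $r=4$, and let $\cos(\bo^T\b{z})$ equal $1$ with probability $p$ and $-1$ with probability $1-p$ (realizable in $d=1$ with $\b{z}=1$ and $\S=p\delta_0+\frac{1-p}{2}(\delta_{\pi}+\delta_{-\pi})$). Then $\E|\xi|^4=16\left[(1-p)^4p+p^4(1-p)\right]\approx 16p$, while the Gaussian with the matching variance $4p(1-p)$ has fourth moment $3\left[4p(1-p)\right]^2\approx 48p^2$, which is far smaller for small $p$. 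The inequality you actually need is the comparison against $N(0,1)$, namely $\E\big|\sum_{i=1}^m\xi_i\big|^r\le m^{r/2}\,\E|Z|^r$, where the variance proxy $1$ is Popoviciu's bound (the square of half the length of the supporting interval), not the true variance. That inequality is plausible --- by convex ordering the extremal case is an i.i.d.\ two-point (shifted, scaled binomial) family, and at the symmetric point $p=1/2$ it is exactly Khintchine with Haagerup's sharp constant --- but it follows from nothing you invoke: the skewed two-point variables are \emph{not} dominated by Rademachers in convex order (test $\phi(x)=(x-1)_+$), so Eaton-type reductions fail because your interval is not centered at the origin, and your symmetrization fallback provably costs a constant factor (already in the Rademacher case, the quadratic variation $\sum_i(\xi_i-\xi_i^\prime)^2$ concentrates at $2m$, so that route cannot give better than $\sqrt{2}\,C^\prime_r$). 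Thus the case you yourself identify as the only nontrivial one is left with a genuine gap.

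Two closing remarks. First, if you accept a constant factor ($2C^\prime_r$, say) in place of $C^\prime_r$, your plan does close: symmetrize pointwise, condition, apply Khintchine--Haagerup, and bound the quadratic variation by $4m$. Second, the bare constant $C^\prime_r$ is delicate for the paper as well: combining \eqref{Eq:symm} with \eqref{Eq:type} yields $2C^\prime_r$, the symmetrization factor $2$ having been dropped in the final statement, so neither your argument nor the paper's, as written, delivers the constant exactly as stated in the theorem.
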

\begin{proof}[Proof (sketch)]
As in Theorem~\ref{Thm:conc}, we show that $\Vert k-\hat{k}\Vert_{L^r(\K)}$ satisfies the bounded difference property, hence by the McDiarmid's inequality, it concentrates 
around its expectation $\E\Vert k-\hat{k}\Vert_{L^r(\K)}$. By symmetrization, we then show that $\E\Vert k-\hat{k}\Vert_{L^r(\K)}$ is upper bounded in terms of $\bb{E}_{\bm{\varepsilon}}\left\Vert \sum^m_{i=1}\varepsilon_i\cos(\langle\bo_i,\cdot-\cdot\rangle)\right\Vert_{L^r(\K)}$,
where $\bm{\varepsilon}:=(\varepsilon_i)^m_{i=1}$ are Rademacher random variables. By exploiting 
the fact that
$L^r(\K)$ is a Banach space of type $\min\{r,2\}$, the result follows. The details are provided in Section~\ref{sec:proof:Lr} of the supplementary material.
\end{proof}

\begin{remark}
    Theorem~\ref{Thm:Lr} shows an improved dependence on $|\K|$
	without the extra $ \sqrt{\log|\K|}$ factor given in Corollary~\ref{cor:lr} and therefore provides a better rate for $2\le r<\infty$ when the diameter of $\K$ grows, i.e., $\Vert \hat{k}-k\Vert_{L^r(\K_m)}\stackrel{a.s.}{\rightarrow} 0$
	if $|\K_m|=o(m^{\frac{r}{4d}})$ as $m\rightarrow\infty$. However, for $1< r<2$, Theorem~\ref{Thm:Lr} provides a slower rate than Corollary~\ref{cor:lr} and therefore it is appropriate
	to use the bound in Corollary~\ref{cor:lr}. While one might wonder why we only considered the convergence of $\Vert \hat{k}-k\Vert_{L^r(\K)}$ and not $\Vert \hat{k}-k\Vert_{L^r(\bb{R}^d)}$, 
	it is important to note that the latter is not well-defined because $\hat{k}\notin L^r(\bb{R}^d)$ even if $k\in L^r(\bb{R}^d)$.\null\hfill$\blacksquare$
\end{remark}

\section{Approximation of kernel derivatives}\label{sec:kernel-derivatives}
In the previous section we focused on the approximation of the kernel function where we presented uniform and $L^r$ convergence guarantees on compact sets for the random Fourier feature approximation, and discussed how fast the diameter of these sets can grow to preserve 
uniform and $L^r$ convergence almost surely. In this section, we propose an approximation to derivatives of the kernel and analyze the uniform and $L^r$ convergence behavior 
of the proposed approximation. As motivated in Section~\ref{sec:intro}, the question of approximating the derivatives of the kernel through finite dimensional random feature map 
is also important as it enables to speed up several interesting machine learning tasks that 
involve the derivatives of the kernel \cite{zhou08derivative,shi10hermite,rosasco10regularization,rosasco13nonparametric,ying12learning,sriperumbudur14density}, see for example the recent 
infinite dimensional exponential family fitting technique \cite{strathmann15gradient}, which implements this idea.

To this end, we consider $k$ as in (\ref{eq:Bochner0}) and define $h_a:=\cos(\frac{\pi a}{2}+\cdot),\,a\in\bb{N}$ (in other words $h_0=\cos$, $h_1=-\sin$, $h_2=-\cos$, $h_3=\sin$ and $h_a=h_{a\!\!\!\mod\!4}$). For $\b{p},\b{q}\in\bb{N}^d$,
assuming $\int|\bo^{\b{p}+\b{q}}|\,\d\Lambda(\bo)<\infty$, it follows from the dominated convergence theorem that
\begin{align}
    \p^{\b{p},\b{q}} k(\b{x},\b{y})  &= \int_{\R^d} \bo^{\b{p}}(-\bo)^{\b{q}} h_{|\b{p}+\b{q}|}\left(\bo^T(\b{x}-\b{y})\right)\d\S(\bo)\nonumber\\
                                     &= \int_{\R^d} \bo^{\b{p}+\b{q}}\left[ h_{|\b{p}|}(\bo^T\b{x}) h_{|\b{q}|}(\bo^T\b{y}) + h_{3+|\b{p}|}(\bo^T\b{x}) h_{3+|\b{q}|}(\bo^T\b{y}) \right] \d\S(\bo),\nonumber
\end{align}
so that $\p^{\b{p},\b{q}} k(\b{x},\b{y})$ can be approximated by replacing $\S$ with $\S_m$, resulting in 
\begin{align}
    \widehat{\p^{\b{p},\b{q}} k}(\b{x},\b{y}):=s^{\b{p},\b{q}}(\b{x},\b{y})  =\frac{1}{m} \sum_{j=1}^m \bo_j^{\b{p}}(-\bo_j)^{\b{q}} h_{|\b{p}+\b{q}|}\left(\bo_j^T(\b{x}-\b{y})\right)
                                  = \left\langle \phi^{\b{p}}(\b{x}),\phi^{\b{q}}(\b{y})\right\rangle_{\bb{R}^{2m}},\label{eq:deriv-approx}
\end{align}
where $\phi^{\b{p}}(\b{u}):=\frac{1}{\sqrt{m}}\left(\bo_1^{\b{p}}h_{|\b{p}|}(\bo_1^T\b{u}),\cdots,\bo_m^{\b{p}}h_{|\b{p}|}(\bo_m^T\b{u}),\bo_1^{\b{p}}h_{3+|\b{p}|}(\bo_1^T\b{u}),\cdots,\bo_m^{\b{p}}h_{3+|\b{p}|}(\bo_m^T\b{u})\right)$ 
and $(\bo_j)^m_{j=1}\stackrel{i.i.d.}{\sim}\S$. Now the goal is to understand the behavior of $\Vert s^{\b{p},\b{q}}-\p^{\b{p},\b{q}} k\Vert_{\K\times\K}$ and $\Vert s^{\b{p},\b{q}}-\p^{\b{p},\b{q}} k\Vert_{L^r(\K)}$ for $r\in[1,\infty)$, i.e.,
obtain analogues of Theorems~\ref{Thm:conc} and \ref{Thm:Lr}. 

As in the proof sketch of Theorem~\ref{Thm:conc}, while $\Vert s^{\b{p},\b{q}}-\p^{\b{p},\b{q}} k\Vert_{\K\times \K}$ can be analyzed 
as the suprema of an empirical process indexed by a suitable function class (say $\G$), some technical issues arise because $\G$ is not uniformly bounded. This means McDiarmid or Talagrand's
inequality cannot be applied to achieve concentration and bounding Rademacher average by Dudley entropy bound may not be reasonable. While these issues can be tackled by resorting to
more technical and refined methods, in this paper, we generalize (see Theorem~\ref{theorem:rates:k-and-derivatives} which is proved in Section~\ref{sec:proof:derivatives:suppS-bounded} of the supplement) 
Theorem~\ref{Thm:conc} to derivatives under the restrictive assumption that $\text{supp}(\S)$
is bounded (note that many popular kernels including the Gaussian do not satisfy this assumption). We also present another result (see Theorem~\ref{theorem:rates:k-and-derivatives-unboundedS})
by generalizing the proof technique\footnote{We also correct some technical issues in the proof of \cite[Claim 1]{rahimi07random}, where  (i) a shift-invariant argument was applied to the 
\emph{non-shift invariant} kernel estimator $\hat{k}(\b{x},\b{y})=\frac{1}{m}\sum_{j=1}^m 2\cos(\bo_j^T \b{x} +b_j)\cos(\bo_j^T \b{y}+b_j)=\frac{1}{m}\sum_{j=1}^m\left[\cos(\bo_j^T(\b{x}-\b{y})) + \cos(\bo_j^T(\b{x}+\b{y})+2b_j)\right]$, 
(ii) the \emph{convexity} of $\K$ was not imposed leading to possibly undefined Lipschitz constant ($L$) and 
(iii) the randomness of $\bm{\Delta}^*=\argmax_{\bm{\Delta}\in \K_{\Delta}} \big\|\nabla[k(\bm{\Delta})-\hat{k}(\bm{\Delta})]\big\|_2$ 
was not taken into account, thus the upper bound on the expectation of the squared Lipschitz constant ($\E[L^2]$) does not hold.}
of \cite{rahimi07random} to \emph{unbounded} functions where the boundedness assumption of $\text{supp}(\S)$ is relaxed but at the expense of a worse rate (compared to Theorem~\ref{theorem:rates:k-and-derivatives}).
\begin{theorem}\label{theorem:rates:k-and-derivatives}
Let $\b{p},\b{q}\in\N^{d}$, $T_{\b{p},\b{q}}:= \sup_{\bo\in \emph{supp}(\S)}\left|\bo^{\b{p}+\b{q}}\right|$, 
$C_{\b{p},\b{q}} :=  \E_{\bo\sim \S}\left[ \left|\bo^{\b{p}+\b{q}}\right| \left\|\bo \right\|_2^2\right]$, and assume that $C_{2\b{p},2\b{q}}<\infty$. Suppose 
$\emph{supp}(\S)$ is bounded if $\b{p}\ne \b{0}$ and $\b{q}\ne \b{0}$.
Then for any $\tau>0$ and non-empty compact set $\K\subset\bb{R}^d$,
\begin{align*}
\S^m\left(\left\{(\bo_i)^m_{i=1}: \|\p^{\b{p},\b{q}} k - s^{\b{p},\b{q}}\|_{\K\times \K}\ge \frac{H(d,\b{p},\b{q},|\K|) + T_{\b{p},\b{q}}\sqrt{2\tau}}{\sqrt{m}}\right\}\right)\le e^{-\tau},
\end{align*}
where $$H(d,\b{p},\b{q},|\K|)= 32 \sqrt{2 d\, T_{2\b{p},2\b{q}}} \left[ \sqrt{U(\b{p},\b{q},|\K|)} + \frac{1}{2\sqrt{U(\b{p},\b{q},|\K|)}}
	+ \sqrt{ \log(\sqrt{C_{2\b{p},2\b{q}}} + 1) } \right],$$
  \hspace*{.52cm} $U(\b{p},\b{q},|\K|) =  \log\left(2|\K|T^{-1/2}_{2\b{p},2\b{q}}+1\right)$.\vspace{1mm}
\end{theorem}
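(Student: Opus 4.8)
The plan is to follow the strategy of Theorem~\ref{Thm:conc} verbatim, under the dictionary that the uniform bound $1$ on $\cos$ is replaced by $T_{\b p,\b q}=\sqrt{T_{2\b p,2\b q}}$ and the second-moment quantity $\sigma$ is replaced by $\sqrt{C_{2\b p,2\b q}}$. First I would write the error as the supremum of a centered empirical process: since $s^{\b p,\b q}$ and $\p^{\b p,\b q}k$ depend on $\b x,\b y$ only through $\b u:=\b x-\b y$, and $\bo^{\b p}(-\bo)^{\b q}=(-1)^{|\b q|}\bo^{\b p+\b q}$, we have $\|\p^{\b p,\b q}k-s^{\b p,\b q}\|_{\K\times\K}=\sup_{g\in\G}|\S_m g-\S g|$ with $\G:=\{g_{\b u}(\bo)=(-1)^{|\b q|}\bo^{\b p+\b q}h_{|\b p+\b q|}(\bo^T\b u):\b u\in\K_\Delta\}$. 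Because $|h_{|\b p+\b q|}|\le 1$, every $g\in\G$ obeys $|g(\bo)|\le|\bo^{\b p+\b q}|\le T_{\b p,\b q}$ on $\mathrm{supp}(\S)$; this is exactly where boundedness of the support (hence finiteness of $T_{\b p,\b q}$) enters. Viewing $F:=\|\p^{\b p,\b q}k-s^{\b p,\b q}\|_{\K\times\K}$ as a function of the i.i.d.\ sample $(\bo_i)_{i=1}^m$, replacing one $\bo_j$ perturbs $s^{\b p,\b q}$ by at most $2T_{\b p,\b q}/m$ uniformly in $\b u$, so $F$ has the bounded-difference property with constants $2T_{\b p,\b q}/m$. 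McDiarmid's inequality then gives $F\le\E F+T_{\b p,\b q}\sqrt{2\tau/m}$ with probability at least $1-e^{-\tau}$, producing the $T_{\b p,\b q}\sqrt{2\tau}/\sqrt m$ term.

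It remains to bound $\E F$. By the symmetrization inequality, $\E F\le 2\,\E_{\bo}\E_{\bm\varepsilon}\sup_{g\in\G}|\tfrac1m\sum_i\varepsilon_i g(\bo_i)|$, and I would control the inner Rademacher average by Dudley's entropy integral in the random metric $L^2(\S_m)$. The key estimate is the covering number of $\G$. Since $h_{|\b p+\b q|}$ is $1$-Lipschitz and $\b u\mapsto\bo^T\b u$ has gradient $\bo$, $|g_{\b u}(\bo)-g_{\b u'}(\bo)|\le|\bo^{\b p+\b q}|\,\|\bo\|_2\,\|\b u-\b u'\|_2$, whence $\|g_{\b u}-g_{\b u'}\|_{L^2(\S_m)}\le\sqrt{\hat C}\,\|\b u-\b u'\|_2$ with $\hat C:=\tfrac1m\sum_i|\bo_i^{2\b p+2\b q}|\,\|\bo_i\|_2^2$ the empirical analogue of $C_{2\b p,2\b q}$ (using $|\bo^{\b p+\b q}|^2=|\bo^{2\b p+2\b q}|$). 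Thus $\b u\mapsto g_{\b u}$ is $\sqrt{\hat C}$-Lipschitz from $(\K_\Delta,\|\cdot\|_2)$ into $(\G,L^2(\S_m))$, and since $\K_\Delta\subseteq\{\b u:\|\b u\|_2\le|\K|\}$ a volumetric cover gives $\log N(\G,L^2(\S_m),\epsilon)\le d\log(1+2|\K|\sqrt{\hat C}/\epsilon)$. Moreover $\|g_{\b u}\|_{L^2(\S_m)}\le\sqrt{T_{2\b p,2\b q}}$, so the entropy integral runs only up to the $L^2(\S_m)$-radius $\sqrt{T_{2\b p,2\b q}}$ of $\G$, beyond which $\log N=0$.

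Finally I would evaluate the integral. Substituting $\epsilon=\sqrt{T_{2\b p,2\b q}}\,\eta$ and using $1+ab\le(1+a)(1+b)$ to split the logarithm as $\log(1+2|\K|T_{2\b p,2\b q}^{-1/2}\eta^{-1})+\log(1+\sqrt{\hat C})$, together with $\sqrt{s+t}\le\sqrt s+\sqrt t$, reduces the $|\K|$-dependent part to $\int_0^1\sqrt{\log(1+B/\eta)}\,d\eta$ with $B=2|\K|T_{2\b p,2\b q}^{-1/2}$; the bound $\log(1+B/\eta)\le U-\log\eta$ (valid for $\eta\le1$), the change of variables $\eta=e^{-s}$, and Jensen's inequality for the resulting $\mathrm{Exp}(1)$-integral give $\int_0^1\sqrt{\log(1+B/\eta)}\,d\eta\le\sqrt{U+1}\le\sqrt U+\tfrac1{2\sqrt U}$, where $U=\log(2|\K|T_{2\b p,2\b q}^{-1/2}+1)$. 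The remaining term is handled by Jensen once more: since $x\mapsto\sqrt{\log(1+\sqrt x)}$ is concave and $\E\hat C=C_{2\b p,2\b q}$, we get $\E_{\bo}\sqrt{\log(1+\sqrt{\hat C})}\le\sqrt{\log(1+\sqrt{C_{2\b p,2\b q}})}$. Collecting the factor $\sqrt{T_{2\b p,2\b q}}$ from the radius, the $\sqrt d$ from the covering dimension, and the numerical constant from symmetrization and Dudley's bound yields $\E F\le H(d,\b p,\b q,|\K|)/\sqrt m$ with the stated $H$, and adding the McDiarmid term completes the proof.

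The main obstacle is precisely the one flagged before the statement: the class $\G$ is not uniformly bounded for general $\S$, so neither the bounded-difference step nor the truncation of the entropy integral is directly available. Assuming $\mathrm{supp}(\S)$ bounded restores a finite uniform bound $T_{\b p,\b q}$ and a finite $L^2(\S_m)$-radius $\sqrt{T_{2\b p,2\b q}}$, which is what makes the whole McDiarmid--symmetrization--Dudley pipeline run; the accompanying delicate point is extracting the correctly weighted covering number, so that the polynomial weight $\bo^{\b p+\b q}$ surfaces as $T_{2\b p,2\b q}$ (through the radius) and as $C_{2\b p,2\b q}$ (through the Lipschitz constant) rather than as cruder moment quantities.
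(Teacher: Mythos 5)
Your proposal is correct and follows essentially the same route as the paper's proof: empirical process over $\G$ indexed by $\K_\Delta$, McDiarmid via the uniform bound $T_{\b{p},\b{q}}$, symmetrization, Dudley's entropy integral with the Lipschitz/covering argument producing the empirical quantity $\hat C$ (the paper's $A_{\b{p},\b{q}}^2$), and Jensen to replace it by $C_{2\b{p},2\b{q}}$. The only deviations are cosmetic---a volumetric covering constant, truncating the entropy integral at the radius rather than the diameter of $\G$, and a single concavity-based Jensen step instead of the paper's repeated Jensen---all of which yield constants at least as good as the stated $H$.
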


\begin{remark}\label{rem:bounded-supp}
      \emph{(i)} Note that Theorem~\ref{theorem:rates:k-and-derivatives} reduces to Theorem~\ref{Thm:conc} if $\b{p}=\b{q}=0$, in which case $T_{\b{p},\b{q}}=T_{2\b{p},2\b{q}}=1$. 
	    If $\b{p}\ne \b{0}$ or $\b{q}\ne \b{0}$, then the boundedness of $\text{supp}(\S)$ implies that $T_{\b{p},\b{q}}<\infty$ and $T_{2\b{p},2\b{q}}<\infty$.\vspace{2.5mm}\\
      \emph{(ii)} \textbf{Growth of $|\K_m|$:} By the same reasoning as in Remark~\ref{rem:optim}\emph{(ii)} and Corollary~\ref{cor:lr}, it follows that 
	    $\|\p^{\b{p},\b{q}} k - s^{\b{p},\b{q}}\|_{\K_m\times \K_m}\stackrel{a.s.}{\longrightarrow}0$ if $|\K_m| = e^{o(m)}$ and 
	    $\Vert \p^{\b{p},\b{q}} k - s^{\b{p},\b{q}}\Vert_{L^r(\K_m)}\stackrel{a.s.}{\longrightarrow}0$ if $m^{-1/2}|\K_m|^{2d/r}\sqrt{\log|\K_m|}\rightarrow 0$ (for $1\le r<\infty$) as 
	    $m\rightarrow \infty$. An exact analogue of Theorem~\ref{Thm:Lr} can be obtained (but with different constants) under the assumption that $\text{supp}(\S)$ is bounded and it can be shown that
	    for $r\in[2,\infty)$, $\Vert \p^{\b{p},\b{q}} k - s^{\b{p},\b{q}}\Vert_{L^r(\K_m)}\stackrel{a.s.}{\longrightarrow}0$ if $|\K_m|=o(m^{\frac{r}{4d}})$.\null\hfill$\blacksquare$
\end{remark}

The following result relaxes the boundedness of $\text{supp}(\S)$ by imposing certain moment conditions on $\S$ but at the expense of a worse rate. The proof relies on
applying Bernstein inequality at the elements of a net (which exists by the compactness of $\K$) combined with a union bound, and extending the approximation error from the anchors 
by a probabilistic Lipschitz argument.
\begin{theorem}\label{theorem:rates:k-and-derivatives-unboundedS}
    Let $\b{p},\b{q}\in\N^{d}$, $\psi$ be continuously differentiable, $\b{z}\mapsto \nabla_{\b{z}}\left[ \p^{\b{p},\b{q}}k(\b{z}) \right]$ be continuous,  $\K\subset \R^d$ be any non-empty compact set, 
     $D_{\b{p},\b{q},\K}:=\sup_{\b{z}\in conv(\K_{\Delta})} \left\|\nabla_{\b{z}}\left[ \p^{\b{p},\b{q}}k(\b{z}) \right]\right\|_2$ and
    $E_{\b{p},\b{q}} := \E_{\bo\sim \S}\left[|\bo^{\b{p}+\b{q}}| \left\|\bo \right\|_2\right]$. Assume that $E_{\b{p},\b{q}}<\infty$. Suppose
    $\exists L>0, \sigma>0$ such that
    \begin{align}
	\E_{\bo\sim \S}\left[|f(\b{z};\bo)|^M\right] &\le \frac{M!\,\sigma^2 L^{M-2}}{2}  \quad (\forall M \ge 2, \forall \b{z}\in \K_{\Delta}),   \label{eq:moment-constraint-on-g}
    \end{align}
    where $f(\b{z};\bo)= \p^{\b{p},\b{q}}k(\b{z}) - \bo^{\b{p}}(-\bo)^{\b{q}} h_{|\b{p}+\b{q}|}\left(\bo^T\b{z}\right)$. Define $F_d:=d^{-\frac{d}{d+1}}+d^{\frac{1}{d+1}}$.\footnote{$F_d$ is monotonically decreasing in $d$, $F_1 = 2$.} Then 
    \begin{eqnarray}
    \lefteqn{\S^m\left(\left\{(\bo_i)_{i=1}^m: \|\p^{\b{p},\b{q}} k - s^{\b{p},\b{q}}\|_{\K\times \K}\ge \epsilon \right\}\right) \le }\nonumber\\
     &&\hspace*{0.1cm}\le 2^{d-1}e^{-\frac{m\epsilon^2}{8 \sigma^2 \left(1+\frac{\epsilon L}{2\sigma^2}\right)}} + F_d 2^{\frac{4d-1}{d+1}} \left[ \frac{|\K|(D_{\b{p},\b{q},\K} + E_{\b{p},\b{q}})}{\epsilon} \right]^{\frac{d}{d+1}} e^{-\frac{m\epsilon^2}{8(d+1) \sigma^2 \left(1+\frac{\epsilon L}{2\sigma^2}\right)}}.
	  \label{eq:unif-rates-for-derivatives-unboundedS}
    \end{eqnarray}
\end{theorem}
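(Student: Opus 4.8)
The plan is to adapt the covering-number argument of \cite{rahimi07random} to the (possibly unbounded) functions $\b{z}\mapsto\bo^{\b{p}}(-\bo)^{\b{q}}h_{|\b{p}+\b{q}|}(\bo^T\b{z})$, replacing Hoeffding's inequality by Bernstein's inequality so that the moment condition (\ref{eq:moment-constraint-on-g}) can be exploited. Since $k$ is translation invariant, both $\p^{\b{p},\b{q}}k$ and $s^{\b{p},\b{q}}$ depend only on $\b{z}:=\b{x}-\b{y}$, so that $\|\p^{\b{p},\b{q}} k - s^{\b{p},\b{q}}\|_{\K\times\K}=\sup_{\b{z}\in\K_\Delta}|g(\b{z})|$ with $g(\b{z}):=\p^{\b{p},\b{q}}k(\b{z})-s^{\b{p},\b{q}}(\b{z})=\frac1m\sum_{j=1}^m f(\b{z};\bo_j)$. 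The integral representation of $\p^{\b{p},\b{q}}k$ gives $\E_{\bo}f(\b{z};\bo)=0$ for every $\b{z}$, so each summand is centred and (\ref{eq:moment-constraint-on-g}) is exactly the Bernstein moment condition; I would first record the pointwise tail bound $\S^m(|g(\b{z})|\ge t)\le 2\exp(-\frac{mt^2}{2(\sigma^2+Lt)})$ for each fixed $\b{z}\in\K_\Delta$.

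The second ingredient is a probabilistic Lipschitz estimate. Because $\psi$ is continuously differentiable and $\b{z}\mapsto\nabla_{\b{z}}[\p^{\b{p},\b{q}}k(\b{z})]$ is continuous, $g$ is differentiable on $conv(\K_\Delta)$ with $\nabla s^{\b{p},\b{q}}(\b{z})=\frac1m\sum_{j}\bo_j^{\b{p}}(-\bo_j)^{\b{q}}h_{1+|\b{p}+\b{q}|}(\bo_j^T\b{z})\bo_j$; hence its Lipschitz constant on the convex set $conv(\K_\Delta)$ satisfies $L_g:=\sup_{\b{z}\in conv(\K_\Delta)}\|\nabla g(\b{z})\|_2\le D_{\b{p},\b{q},\K}+\frac1m\sum_{j}|\bo_j^{\b{p}+\b{q}}|\,\|\bo_j\|_2$, using $|h|\le1$. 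Taking expectations gives $\E L_g\le D_{\b{p},\b{q},\K}+E_{\b{p},\b{q}}$, so Markov's inequality yields $\S^m(L_g\ge u)\le (D_{\b{p},\b{q},\K}+E_{\b{p},\b{q}})/u$. It is precisely here that working with the \emph{convex} set $conv(\K_\Delta)$ is needed, so that the mean value inequality applies along the segment joining any point of $\K_\Delta$ to a nearby anchor, and that $E_{\b{p},\b{q}}<\infty$ is used to justify differentiation under the integral sign; these are the points flagged as gaps in the original argument.

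I would then combine the two ingredients by a net-and-union-bound argument. Cover $\K_\Delta\subseteq\{\b{z}:\|\b{z}\|_2\le|\K|\}$ by $N\le(1+2|\K|/r)^d$ balls of radius $r$ with centres $\b{z}_1,\dots,\b{z}_N\in conv(\K_\Delta)$. On the event $\{\max_i|g(\b{z}_i)|<\epsilon/2\}\cap\{L_g<\epsilon/(2r)\}$ one has $\sup_{\b{z}\in\K_\Delta}|g(\b{z})|\le\max_i|g(\b{z}_i)|+L_g\,r<\epsilon$, whence
\begin{equation*}
\S^m\left(\sup_{\b{z}\in\K_\Delta}|g(\b{z})|\ge\epsilon\right)\le 2N\exp\left(-\frac{m\epsilon^2}{8(\sigma^2+L\epsilon/2)}\right)+\frac{2r(D_{\b{p},\b{q},\K}+E_{\b{p},\b{q}})}{\epsilon},
\end{equation*}
by the pointwise Bernstein bound at level $\epsilon/2$ (with the union bound) and the Lipschitz estimate at $u=\epsilon/(2r)$. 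Writing $\beta:=\frac{m\epsilon^2}{8\sigma^2(1+\epsilon L/(2\sigma^2))}$ and splitting $(1+2|\K|/r)^d\le 2^{d-1}(1+(2|\K|/r)^d)$ isolates a $\b{z}$-independent term proportional to $e^{-\beta}$ (the first summand of (\ref{eq:unif-rates-for-derivatives-unboundedS})) and a term of the form $Ar^{-d}+Br$ with $A\propto|\K|^d e^{-\beta}$ and $B\propto(D_{\b{p},\b{q},\K}+E_{\b{p},\b{q}})/\epsilon$. Minimising the latter over $r>0$ gives $r^\ast=(Ad/B)^{1/(d+1)}$ and value $F_dA^{1/(d+1)}B^{d/(d+1)}$; substituting $A,B$ reproduces the second summand, with exponent $\beta/(d+1)=\frac{m\epsilon^2}{8(d+1)\sigma^2(1+\epsilon L/(2\sigma^2))}$ as stated and with $F_d=d^{-d/(d+1)}+d^{1/(d+1)}$ being exactly $x^{-d/(d+1)}+x^{1/(d+1)}$ at $x=d$. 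The remaining powers of $2$ follow by tracking the covering constant through the $(1+x)^d$ split.

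I expect the main obstacle to be the oscillation (Lipschitz) step rather than the concentration step. Unlike the bounded-support setting of Theorem~\ref{theorem:rates:k-and-derivatives}, the gradient $\nabla s^{\b{p},\b{q}}$ is unbounded, so only a first-moment (Markov) control of $L_g$ is available; consequently the anchor term is exponentially small while the Lipschitz term is only polynomially small in $r$, and it is this imbalance, resolved by the one-dimensional optimisation over $r$, that forces the degraded exponent $\tfrac{1}{d+1}$ and the polynomial prefactor $[|\K|(D_{\b{p},\b{q},\K}+E_{\b{p},\b{q}})/\epsilon]^{d/(d+1)}$, hence the worse rate relative to Theorem~\ref{theorem:rates:k-and-derivatives}. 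Carrying the numerical constants faithfully through the split and the optimisation is the only genuinely tedious part.
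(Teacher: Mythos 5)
Your proposal is essentially the paper's own proof: the same reduction to $\sup_{\b{z}\in\K_\Delta}\big|\frac{1}{m}\sum_{j}f(\b{z};\bo_j)\big|$, Bernstein's inequality at the centers of an $r$-net combined with a union bound, control of the random Lipschitz constant through $\E[L]\le D_{\b{p},\b{q},\K}+E_{\b{p},\b{q}}$ and Markov's inequality, and the final optimization over $r$, which is indeed where $F_d$ and the $(d+1)$-fold degraded exponent come from. Two small discrepancies keep your constants from matching the statement exactly. First, you invoke the two-sided Bernstein bound with prefactor $2$; this factor survives the union bound and the split $(1+x)^d\le 2^{d-1}(1+x^d)$, so your first summand comes out as $2^{d}e^{-\beta}$ rather than the stated $2^{d-1}e^{-\beta}$, where $\beta=\frac{m\epsilon^2}{8\sigma^2(1+\epsilon L/(2\sigma^2))}$. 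The paper's Lemma~\ref{lemma:Bernstein} (Yurinsky's form of Bernstein) bounds the two-sided deviation \emph{without} the prefactor, and that is what produces $2^{d-1}$; by contrast, your tighter covering bound $(1+2|\K|/r)^d$ versus the paper's $(4|\K|/r+1)^d$ only shrinks the second summand, which is harmless. Second, you place the net centers in $conv(\K_{\Delta})$, but the moment hypothesis \eqref{eq:moment-constraint-on-g} is assumed only for $\b{z}\in\K_{\Delta}$, so Bernstein's inequality cannot be invoked at anchors lying outside $\K_{\Delta}$; take an internal $r$-net of $\K_{\Delta}$ (as the paper does), which also keeps the segments used in your mean-value step inside $conv(\K_{\Delta})$. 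Both repairs are cosmetic and leave the structure of your argument unchanged.
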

\begin{remark}
  \emph{(i)} The compactness of $\K$ implies that of $\K_{\Delta}$. Hence, by the continuity of $\b{z}\mapsto \nabla_{\b{z}}\left[ \p^{\b{p},\b{q}}k(\b{z}) \right]$, one gets $D_{\b{p},\b{q},\K}<\infty$.
	  \eqref{eq:moment-constraint-on-g} holds if $|f(\b{z};\bo)|\le \frac{L}{2}$ and $\E_{\bo\sim \S}\left[|f(\b{z};\bo)|^2\right]\le \sigma^2$ ($\forall \b{z}\in \K_{\Delta}$). 
	  If $\text{supp}(\S)$ is bounded, then the boundedness of $f$ is guaranteed (see Section~\ref{lemma:supp(S)-bounded=>moment-constraint-is-OK} in the supplement).\vspace{2mm}\\
	  \emph{(ii)} In the special case when $\b{p}=\b{q}=\b{0}$, our requirement boils down to the continuously differentiability of $\psi$, $E_{\b{0},\b{0}} = \E_{\bo\sim \S}\left\|\bo\right\|_2 < \infty$, and \eqref{eq:moment-constraint-on-g}.\vspace{2mm}\\
	  \emph{(iii)} Note that (\ref{eq:unif-rates-for-derivatives-unboundedS}) is similar to (\ref{eq:rr-orig}) and therefore based on the discussion in Section~\ref{sec:problem-formulation}, one has
	  $\| \p^{\b{p},\b{q}} k - s^{\b{p},\b{q}}\|_{\K\times \K}=O_{a.s.}(|\K|\sqrt{m^{-1}\log m})$. But the advantage with Theorem~\ref{theorem:rates:k-and-derivatives-unboundedS}
	  over \cite[Claim 1]{rahimi07random} and \cite[Prop.~1]{sutherland15error} is that it can handle unbounded functions. In comparison to Theorem~\ref{theorem:rates:k-and-derivatives}, we obtain worse rates and it 
	  will be of interest to improve the rates of Theorem~\ref{theorem:rates:k-and-derivatives-unboundedS} while handling unbounded functions.\null\hfill$\blacksquare$
\end{remark}

\section{Discussion}\label{sec:conclusions}
In this paper, we presented the first detailed theoretical analysis about the approximation quality of random Fourier features (RFF) that was proposed by \cite{rahimi07random} in the context of
improving the computational complexity of kernel machines. While \cite{rahimi07random,sutherland15error} provided a probabilistic bound on the uniform approximation (over compact subsets of $\bb{R}^d$) of a kernel by random features, the 
result is not optimal. We improved this result by providing a finite-sample bound with optimal rate of convergence and also analyzed the quality of approximation in $L^r$-norm ($1\le r<\infty$).
We also proposed an RFF approximation for derivatives of a kernel and provided theoretical guarantees on the quality of approximation in uniform and $L^r$-norms over compact
subsets of $\bb{R}^d$.

While all the results in this paper (and also in the literature) dealt with the approximation quality of RFF over only compact subsets of $\bb{R}^d$, it is of
interest to understand its behavior over entire $\bb{R}^d$. However, as discussed in Remark~\ref{rem:optim}\emph{(ii)} and in the paragraph following Theorem~\ref{Thm:Lr}, RFF cannot approximate the kernel uniformly
or in $L^r$-norm over $\bb{R}^d$. By truncating the Taylor series expansion of the exponential function, \cite{cotter11explicit} proposed a non-random finite dimensional representation to
approximate the Gaussian kernel which also enjoys the computational advantages of RFF. However, this representation also does not approximate the Gaussian kernel uniformly over $\bb{R}^d$. Therefore, the question remains whether it is possible to approximate a kernel uniformly or in $L^r$-norm over $\bb{R}^d$ but still retaining the 
computational advantages associated with RFF.

\subsubsection*{Acknowledgments}
Z. Szab{\'o} wishes to thank the Gatsby Charitable Foundation for its generous support.

{\small \putbib[./BIB/nips2015_short_conf_names]}
\end{bibunit}

\newpage
\begin{bibunit}[plain_with_initials]
\begin{changemargin}{-2cm}{-2cm}
\appendix
{\Large\tb{Supplement}}
\begin{appendices}\label{appendix}
\numberwithin{equation}{section}
\section{Definitions \& notation}\label{sec:def}
Let $(Z,\rho)$ be 
a metric space, $(\Omega,\A)$ a measurable space and $L_0(\Omega,\A)$ denotes the set of $(\Omega,\A)\mapsto \R$ measurable functions. A family of maps 
$\G=\{g_z\}_{z\in Z} \subseteq L_0(\Omega,\A)$ is called a separable Carath{\'e}odory family w.r.t.\ $Z$ if $(Z,\rho)$ is separable and $z\mapsto g_z(\omega)$ is continuous for all $\omega\in \Omega$. 
Let $\G\subseteq L_0(\Omega,\A)$, $\bm{\varepsilon}=(\varepsilon_1,\ldots,\varepsilon_m)$ be a Rademacher sequence, i.e., $\varepsilon_j$-s are i.i.d.\ and 
$\P(\varepsilon_j=1) = \P(\varepsilon_j=-1) = \frac{1}{2}$, and $(\omega_j)_{j=1}^m\in \Omega^m$. The Rademacher average of $\G$ is defined as $\Rad\left(\G,\bo_{1:m}\right):=\E_{\bm{\varepsilon}} \sup_{g\in \G} \left|\frac{1}{m} \sum_{j=1}^m \varepsilon_j g(\omega_j)\right|$; we use the shorthand $\bo_{1:m}=(\bo_1,\ldots,\bo_m)$.
$S\subseteq Z$ is said to be an $r$-net of $Z$ if for any $z\in Z$ there is an $s\in S$ such that $\rho(s,z)\le r$. The $r$-covering number of $Z$ is defined as the size of the smallest $r$-net, i.e., 
$\CN(Z,\rho,r)=\inf\left\{\ell\ge 1: \exists\, s_1,\ldots,s_\ell \text{ such that } Z \subseteq \cup_{j=1}^\ell B_{\rho}(s_j,r)\right\}$, where 
$B_{\rho}(s,r)=\{z\in Z: \rho(z,s)\le r\}$ is the closed ball with center $s\in Z$ and radius $r$. $\log \CN(Z,\rho,r)$ is called the metric entropy.  A $(Z,\left\|\cdot\right\|)$ Banach space is said to be of type $q\in (1,2]$ if there exists a constant $C\in \R$  such that the  
 $\E_{\bm{\varepsilon}} \left\|\sum_{j=1}^m \varepsilon_j f_j\right\| \le C \left(\sum_{j=1}^m\left\|f_j\right\|^q\right)^{\frac{1}{q}}$ holds for every finite set of vectors
$\{f_j\}_{j=1}^m \subseteq Z$.
For example, $L^r(\Omega,\A,\mu)$ spaces are of type $q=\min(2,r)$ \cite[page~73]{lindenstrauss79classical}, where the $C$ constant only depends on $r$ ($C=C_r$). For a $(Z,\left\|\cdot\right\|)$ normed space, $Z^*$ denotes the space of continuous linear functionals on $Z$.

\section{Proofs}\label{sec:supplement}
We provide proofs of the results presented in Sections~\ref{sec:kernel} and \ref{sec:kernel-derivatives}. Lemmas used in the proofs are enlisted in Section~\ref{sec:suppl:lemmas}.

\subsection{Proof of Theorems~\ref{Thm:conc} and \ref{theorem:rates:k-and-derivatives}} \label{sec:proof:derivatives:suppS-bounded}
Below we prove Theorem~\ref{theorem:rates:k-and-derivatives}, thereby Theorem~\ref{Thm:conc} ($\b{p}=\b{q}=\b{0}$). The idea of the proof is as follows: \emph{(i)} We note that
\begin{align}
  \|\p^{\b{p},\b{q}} k - s^{\b{p},\b{q}}\|_{\K\times \K}= \sup_{\b{x},\b{y}\in \K} \left| \p^{\b{p},\b{q}} k(\b{x},\b{y}) - s^{\b{p},\b{q}}(\b{x},\b{y})\right| = \sup_{g\in\G}|\S g-\S_mg|=:\left\|\S-\S_m\right\|_{\G},\label{eq:emproc}
\end{align}
where $\G:=\{g_{\b{z}}: \b{z}\in \K_{\Delta}\}$ and $g_{\b{z}}: \text{supp}(\S) \rightarrow \bb{R}$, $\bo\mapsto\bo^{\b{p}}(-\bo)^{\b{q}} h_{|\b{p}+\b{q}|}\left(\bo^T\b{z}\right)$, which means the object
of interest is the suprema of an empirical process indexed by $\G$. \emph{(ii)} We show that $\left\|\S-\S_m\right\|_{\G}$ is measurable w.r.t.~$\S^m$ by verifying that $\Cal{G}$ is 
a separable Carath{\'e}odory family (see the discussion following Definition 7.4 in \cite{steinwart08support}).
\emph{(iii)} (\ref{eq:emproc}) can be shown to satisfy the bounded difference property in \ref{eq:bdd} and therefore by McDiarmid's inequality (Lemma~\ref{lemma:McDiarmid}), 
$\left\|\S-\S_m\right\|_{\G}$ concentrates around its expectation. 
\emph{(iv)} By applying the symmetrization lemma \cite[Proposition~7.10]{steinwart08support} for the uniformly bounded function family $\G$, we obtain an 
upper bound in terms of the expected Rademacher average of $\G$. \emph{(v)} The Rademacher average is bounded by
the metric entropy of $\G$ (making use of the Dudley's entropy integral \cite[Equation 4.4]{bousquet03new}), for which we can get an estimate by showing that $\G$ is a smoothly parametrized function class 
using the compactness of $\K_{\Delta}$. 

\begin{compactitem}
\item \tb{$\G$ is a separable Carath{\'e}odory family:} $\G$ is a separable Carath{\'e}odory family w.r.t.\ $\K_{\Delta}$ since 
    \begin{compactenum}
      \item $g_{\b{z}}: \text{supp}(\S)\rightarrow\bb{R}$, $\bo \mapsto \bo^{\b{p}}(-\bo)^{\b{q}} h_{|\b{p}+\b{q}|}\left(\bo^T\b{z}\right)$ is measurable  for all $\b{z}\in \K_{\Delta}$.
      \item $\K_{\Delta}\subseteq\R^d$ is separable since $\R^d$ is separable.
      \item $\b{z}\mapsto \bo^{\b{p}}(-\bo)^{\b{q}} h_{|\b{p}+\b{q}|}\left(\bo^T\b{z}\right)$ is continuous for all $\bo\in \text{supp}(\S)$.\vspace{1mm}
    \end{compactenum}
    
\item \tb{Concentration of $\left\|\S-\S_m\right\|_{\G}$ by its bounded difference property}:
	By defining $f(\bo_1,\ldots,\bo_m):=\left\|\S-\S_m\right\|_{\G}$, we have that for $\forall i\in\{1,\ldots,m\}$,
	\begin{eqnarray*}
	  \lefteqn{|f(\bo_1,\ldots,\bo_{i-1},\bo_i,\bo_{i+1},\ldots,\bo_m) - f(\bo_1,\ldots,\bo_{i-1},\bo_i',\bo_{i+1},\ldots,\bo_m)|= }\\
	    &&\hspace*{1cm}= \left|\sup_{g\in\G}\Big|\S g-\frac{1}{m}\sum^m_{j=1}g(\bo_j)\Big| - \sup_{g\in\G}\Big|\S g-\frac{1}{m}\sum^m_{j=1}g(\bo_j)+\frac{1}{m}\left[g(\bo_i)-g(\bo_i')\right]\Big|\right|
	    \le \frac{1}{m} \sup_{g\in \G} \left| g(\bo_i)-g(\bo_i') \right|\\
	    &&\hspace*{1cm} \le \frac{1}{m} \sup_{g\in \G} \left(\left| g(\bo_i)\right| + \left|g(\bo_i') \right|\right) 
	      \le \frac{1}{m} \left[\sup_{g\in \G} \left| g(\bo_i)\right| + \sup_{g\in \G} \left|g(\bo_i') \right|\right]
	      \le \frac{1}{m}\left[| \bo_i^{\b{p}+\b{q}}| + |(\bo_i')^{\b{p}+\b{q}}|\right]
	    \le \frac{2 T_{\b{p},\b{q}}}{m}.
	\end{eqnarray*}
	Applying McDiarmid's inequality (Lemma~\ref{lemma:McDiarmid}) to $f$, for any $\tau>0$, with probability at least $1-e^{-\tau}$ over the choice of 
	$(\bo_i)^m_{i=1}\stackrel{i.i.d.}{\sim}\S$, 
\begin{align}
  \left\|\S-\S_m\right\|_{\G} &\le \E_{\bo_{1:m}}\left\|\S-\S_m\right\|_{\G} + T_{\b{p},\b{q}}\sqrt{\frac{2\tau}{m}}. \label{eq:bound-by-bdiff}
\end{align}
\item \tb{Bounding $\E_{\bo_{1:m}}\left\|\S-\S_m\right\|_{\G}$:} 
By the symmetrization lemma \cite[Proposition~7.10]{steinwart08support} applied for the uniformly bounded function family $\G$ ($\sup_{g\in\G}\left\|g\right\|_{\infty}\le T_{\b{p},\b{q}} <\infty$), we have
    \begin{align}
    \E_{\bo_{1:m}}\left\|\S-\S_m\right\|_{\G} &\le 2\, \E_{\bo_{1:m}} \Rad\left(\G,\bo_{1:m}\right). \label{eq:convert-to-Rad}
    \end{align}
\item 
\tb{Bounding $\Rad\left(\G,\bo_{1:m}\right)$:}
      Using Dudley's  entropy integral \cite[Equation 4.4]{bousquet03new}, we have
      \begin{align}
	  \Rad\left(\G,\bo_{1:m}\right) &\le \frac{8\sqrt{2}}{\sqrt{m}}\int_0^{|\G|_{L^2(\S_m)}} \sqrt{\log \CN(\G,L^2(\S_m),r)}\, \d r. \label{eq:Rad-logN}
      \end{align}
      The upper limit of the integral can be bounded as
\begin{eqnarray}
|\G|_{L^2(\S_m)} = \sup_{g_1,g_2\in \G} \left\|g_1-g_2\right\|_{L^2(\S_m)} \le \sup_{g_1,g_2\in \G} \left(\left\|g_1\right\| +\left\|g_2\right\|_{L^2(\S_m)}\right)\le 2\sup_{g\in \G} \left\|g\right\|_{L^2(\S_m)}
\stackrel{(\ast)}{\le} 2 \sqrt{T_{2\b{p},2\b{q}}}, \label{eq:int-b2}
\end{eqnarray}
where $(\ast)$ follows from
\begin{align*}
 \sup_{g\in\G}\left\| g\right\|_{L^2(\S_m)} =\sup_{\b{z}\in\K_{\Delta}}\sqrt{\frac{1}{m}\sum^m_{j=1}g^2_{\b{z}}(\bo_j)}
		  =\sup_{\b{z}\in\K_{\Delta}}\sqrt{\frac{1}{m}\sum^m_{j=1} \left[\bo_j^{\b{p}}(-\bo_j)^{\b{q}} h_{|\b{p}+\b{q}|}\left(\bo_j^T\b{z}\right)\right]^2}
		  \le \sqrt{\frac{1}{m}\sum^m_{j=1} \bo_j^{2(\b{p}+\b{q})}} \le \sqrt{T_{2\b{p},2\b{q}}}.
\end{align*}
\item 
\tb{Bounding $\Cal{N}(\G,L^2(\S_m),r)$ by the compactness of $\K_{\Delta}$}: For any
    $g_{\b{z}_1}$, $g_{\b{z}_2} \in \G$, $$\left\| g_{\b{z}_1} - g_{\b{z}_2} \right\|_{L^2(\S_m)} = \left\|\bo\mapsto \bo^{\b{p}}(-\bo)^{\b{q}} \left(h_{|\b{p}+\b{q}|}\left(\bo^T\b{z}_1\right) - h_{|\b{p}+\b{q}|}\left(\bo^T\b{z}_2\right)\right)\right\|_{L^2(\S_m)}.$$
    By the mean value theorem, there exists $c\in(0,1)$ such that 
    $$\left|h_{|\b{p}+\b{q}|}\left(\bo^T\b{z}_1\right) - h_{|\b{p}+\b{q}|}\left(\bo^T\b{z}_2\right) \right|\le \left\|\nabla_{\b{z}} h_{|\b{p}+\b{q}|}\left(\bo^T(c\b{z}_1+(1-c)\b{z}_2)\right) \right\|_2 \left\|\b{z}_1-\b{z}_2\right\|_2,$$
    where $$\left\|\nabla_{\b{z}} h_{|\b{p}+\b{q}|}\left(\bo^T(c\b{z}_1+(1-c)\b{z}_2)\right) \right\|_2\le\Vert\bo\Vert_2.$$
Therefore,
\begin{align}
 \left\| g_{\b{z}_1} - g_{\b{z}_2} \right\|_{L^2(\S_m)} &\le \sqrt{\frac{1}{m}\sum_{j=1}^m\left(\left|\bo_j^{\b{p}+\b{q}}\right| \left\|\bo_j \right\|_2 \left\|\b{z}_1-\b{z}_2\right\|_2\right)^2}
	  = \left\|\b{z}_1-\b{z}_2\right\|_2 \sqrt{\frac{1}{m}\sum_{j=1}^m\left|\bo_j^{2(\b{p}+\b{q})}\right| \left\|\bo_j \right\|_2^2 }. \label{eq:norm-upper-bound}
\end{align}
\eqref{eq:norm-upper-bound} shows that the existence of an $\epsilon$-net on $\left(\K_{\Delta},\left\|\cdot\right\|_2\right)$ implies an 
$r=\epsilon\sqrt{\frac{1}{m}\sum_{j=1}^m\left|\bo_j^{2(\b{p}+\b{q})}\right| \left\|\bo_j \right\|_2^2}$-net on $(\G,L^2(\S_m))$. In other words,
\begin{equation}
\CN\left(\G,L^2(\S_m),r\right)\le \CN\left(\K_{\Delta},\left\|\cdot\right\|_2,r\left(\frac{1}{m}\sum_{j=1}^m\left|\bo_j^{2(\b{p}+\b{q})}\right| \left\|\bo_j \right\|_2^2\right)^{-\frac{1}{2}}\right).\nonumber 
\end{equation}
Define $$A_{\b{p},\b{q}}:=\sqrt{\frac{1}{m}\sum_{j=1}^m\left|\bo_j^{2(\b{p}+\b{q})}\right| \left\|\bo_j \right\|_2^2}.$$ By using the fact that 
$\K_\Delta\subseteq B_{\Vert\cdot\Vert_2}\left(\b{t},\frac{|\K_\Delta|}{2}\right)$ for some $\b{t}\in\bb{R}^d$ and $\CN(B_{\Vert\cdot\Vert_2}(\b{s},R),\left\|\cdot\right\|_2,\epsilon)\le \left(\frac{4R}{\epsilon}+1\right)^d$ for any $\b{s}\in\bb{R}^d$ \cite[Lemma~2.5, page~20]{geer09empirical}, 
we obtain
\begin{equation}
\CN\left(\G,L^2(\S_m),r\right) \le \left( \frac{4|\K|A_{\b{p},\b{q}}}{r}+1 \right)^d,\label{eq:CN-bound:1}
\end{equation}
by noting that $|\K_\Delta|\le 2|\K|$. Using (\ref{eq:int-b2}) and (\ref{eq:CN-bound:1}) in (\ref{eq:Rad-logN}), we have
\begin{align}
	  \Rad\left(\G,\bo_{1:m}\right) &\le \frac{8 \sqrt{2d} }{\sqrt{m}} \int_0^{2 \sqrt{T_{2\b{p},2\b{q}}}} \sqrt{\log\left( \frac{4|\K|A_{\b{p},\b{q}}}{r}+1 \right)} \, \d r
	    &\le \frac{8 \sqrt{2d} }{\sqrt{m}} \int_0^{2 \sqrt{T_{2\b{p},2\b{q}}}} \sqrt{\log\left( \frac{4|\K|A_{\b{p},\b{q}} + 2 \sqrt{T_{2\b{p},2\b{q}}}}{r} \right)}\,\d r,
	    \label{eq:R-bound-final-0}
    \end{align}
where in the last inequality we used the fact that $r\le 2\sqrt{T_{2\b{p},2\b{q}}}$. By bounding $2|\K|A_{\b{p},\b{q}} + \sqrt{T_{2\b{p},2\b{q}}}\le (2|\K|+\sqrt{T_{2\b{p},2\b{q}}})(A_{\b{p},\b{q}}+1)$, 
(\ref{eq:R-bound-final-0}) reduces to
\begin{eqnarray}
 \Rad\left(\G,\bo_{1:m}\right) &{}\le{}& \frac{8 \sqrt{2d} }{\sqrt{m}}\left(\int_0^{2 \sqrt{T_{2\b{p},2\b{q}}}} \sqrt{\log\frac{2\left(2|\K|+\sqrt{T_{2\b{p},2\b{q}}}\right)}{r}}\,\d r+2 \sqrt{T_{2\b{p},2\b{q}}\log(A_{\b{p},\b{q}}+1)}\right)\nonumber\\
 &{}={}& \frac{16 \sqrt{2d} }{\sqrt{m}}\sqrt{T_{2\b{p},2\b{q}}}\left(\int_0^{1} \sqrt{\log\frac{B_{\b{p},\b{q}}+1}{r}}\,\d r+\sqrt{\log(A_{\b{p},\b{q}}+1)}\right), 
 \label{eq:rad-final}
 \end{eqnarray}
 where the last equality is obtained by changing the variable of integration and defining $B_{\b{p},\b{q}}:=\frac{2|\K|}{\sqrt{T_{2\b{p},2\b{q}}}}$.
By applying Lemma~\ref{lemma:integral} to bound the integral in (\ref{eq:rad-final}), we obtain
\begin{equation}
 \Rad\left(\G,\bo_{1:m}\right) \le \frac{16 \sqrt{2d} }{\sqrt{m}}\sqrt{T_{2\b{p},2\b{q}}}\left(\sqrt{\log(B_{\b{p},\b{q}}+1)}+\frac{1}{2\sqrt{\log(B_{\b{p},\b{q}}+1)}}+\sqrt{\log(A_{\b{p},\b{q}}+1)}\right).
 \label{eq:rad-final-1}
 \end{equation}
\item 
\tb{Bounding the expectation of the Rademacher average}: From \eqref{eq:rad-final-1}, we have
\begin{align}
\E_{\bo_{1:m}} \Rad\left(\G,\bo_{1:m}\right) &\le \frac{16 \sqrt{2d} }{\sqrt{m}}\sqrt{T_{2\b{p},2\b{q}}}\left[\sqrt{\log(B_{\b{p},\b{q}}+1)}+\frac{1}{2\sqrt{\log(B_{\b{p},\b{q}}+1)}}
+\sqrt{\log\left(\sqrt{C_{2\b{p},2\b{q}}}+1\right)}\right],\label{eq:E-bound-1}
\end{align}
which is obtained by repeated applications of Jensen's inequality to bound $\bb{E}_{\bo_{1:m}}\sqrt{\log(A_{\b{p},\b{q}}+1)}\le \sqrt{\bb{E}_{\bo_{1:m}}\log(A_{\b{p},\b{q}}+1)}\le \sqrt{\log(\bb{E}_{\bo_{1:m}} A_{\b{p},\b{q}}+1)}$
where $\bb{E}_{\bo_{1:m}}A_{\b{p},\b{q}}\le \sqrt{\frac{1}{m}\sum^m_{j=1}\bb{E}_{\bo_j}\left[ \left|\bo_j^{2(\b{p}+\b{q})}\right| \left\|\bo_j \right\|_2^2 \right]}\le \sqrt{C_{2\b{p},2\b{q}}}$.\vspace{.5mm}\\
\item \tb{Final bound:} Combining \eqref{eq:bound-by-bdiff}, \eqref{eq:convert-to-Rad} and \eqref{eq:E-bound-1} yields the result.\qed
\end{compactitem}

\subsection{Proof of Theorem~\ref{Thm:Lr}}\label{sec:proof:Lr}
Below we prove Theorem~\ref{Thm:Lr}: (i) We show that $f(\bo_1,\ldots,\bo_m):=\Vert k-\hat{k}\Vert_{L^r(\K)}$ satisfies the bounded difference property, hence by the McDiarmid's inequality (Lemma~\ref{lemma:McDiarmid}) it concentrates 
around its expectation $\E\Vert k-\hat{k}\Vert_{L^r(\K)}$. (ii) By $L^r(\K) = \left[L^{\tilde{r}}(\K)\right]^*$ ($\frac{1}{r}+\frac{1}{\tilde{r}}=1$), the separability of $L^{\tilde{r}}(\K)$ and the symmetrization lemma \cite[Lemma~2.3.1]{Vaart-96} 
the value of $\E\Vert k-\hat{k}\Vert_{L^r(\K)}$ is upper bounded in terms of $\bb{E}_{\bm{\varepsilon}}\left\Vert \sum^m_{i=1}\varepsilon_i\cos(\langle\bo_i,\cdot-\cdot\rangle)\right\Vert_{L^r(\K)}$. (iii) Exploiting that
$L^r(\K)$ is of type $\min(r,2)$ with a constant independent of $\K$, we get the result.

\begin{compactitem}
 \item \tb{Concentration of $\Vert k-\hat{k}\Vert_{L^r(\K)}$ by its bounded difference property}:
      Define $\hat{k}_i(\b{x},\b{y})=\frac{1}{m}\sum_{j\ne i}\cos(\bo^T_j(\b{x}-\b{y}))+\frac{1}{m}\cos(\tilde{\bo}^T_i(\b{x}-\b{y}))$ where $\tilde{\bo}_i$ is an i.i.d.~copy of $\bo_i$.
      Then $\Vert k-\hat{k}\Vert_{L^r(\K)}$ satisfies the bounded difference property in (\ref{eq:bdd}):
      \begin{align}
      \sup_{(\bo_i)^m_{i=1},\tilde{\bo}_i}\left|\Vert k-\hat{k}\Vert_{L^r(\K)} - \Vert k-\hat{k}_i\Vert_{L^r(\K)}\right| &\le 
      \sup_{(\bo_i)^m_{i=1},\tilde{\bo}_i}\Vert \hat{k}_i - \hat{k}\Vert_{L^r(\K)} \le\frac{2}{m}\sup_{\bo_i}\Vert\cos(\langle\bo_i,\cdot-\cdot\rangle)\Vert_{L^r(\K)}
      \le \frac{2}{m}\text{vol}^{2/r}(\K)\nonumber
      \end{align}
      and therefore by McDiarmid's inequality (Lemma~\ref{lemma:McDiarmid}), for any $\tau>0$, with probability at least $1-e^{-\tau}$ over the choice of $(\bo_i)^m_{i=1}\sim\Lambda$, we have
      \begin{equation}
	  \Vert k-\hat{k}\Vert_{L^r(\K)}\le \bb{E}_{\bo_{1:m}}\Vert k-\hat{k}\Vert_{L^r(\K)}+\text{vol}^{2/r}(\K)\sqrt{\frac{2\tau}{m}}.\label{Eq:mcd1}
      \end{equation}
  \item \tb{Symmetrization, reduction to $\E_{\bm{\varepsilon}}\left\Vert \sum^m_{i=1}\varepsilon_i\cos(\langle\bo_i,\cdot-\cdot\rangle)\right\Vert_{L^r(\K)}$}: Let $\tilde{r}$ be the dual exponent of 
	  $r$, in other words $\frac{1}{r}+\frac{1}{\tilde{r}}=1$. Then, by $L^r(\K) = \left[L^{\tilde{r}}(\K)\right]^*$ and the separability of $L^{\tilde{r}}(\K)$, there exists (see Lemma~\ref{lemma:Lr-as-countable-sup}) a 
	  countable $\G\subseteq L^{\tilde{r}}(\K)$ ($\forall g\in\G$, $\left\|g\right\|_{L^{\tilde{r}}(\K)}=1$) such that
	\begin{align}
	    \Vert k-\hat{k}\Vert_{L^r(\K)} &= \sup_{g\in \G} \left|\int_{\K\times \K} g(\b{x},\b{y}) \left[k(\b{x},\b{y})-\hat{k}(\b{x},\b{y})\right]\d \b{x} \d \b{y}\right|.\label{eq:norm-repr}
	\end{align}
	One can rewrite the argument of this supremum by Eqs.~\eqref{eq:Bochner0}-\eqref{eq:kernel-approx} as
	\begin{align*}
	    \int_{\K\times \K} g(\b{x},\b{y}) \left[k(\b{x},\b{y})-\hat{k}(\b{x},\b{y})\right]\d \b{x} \d \b{y} 
	    &= \int_{\K\times \K} g(\b{x},\b{y}) \left[\int_{\R^d}\cos(\bo^T(\b{x}-\b{y})) \d (\S-\S_m)(\bo)\right]\d \b{x} \d \b{y}\\
	    &= \int_{\R^d} \left[\int_{\K\times \K} g(\b{x},\b{y}) \cos(\bo^T(\b{x}-\b{y})) \d \b{x} \d \b{y} \right]\d (\S-\S_m)(\bo),
	\end{align*}
	and thus 
	\begin{align}
	    \Vert k-\hat{k}\Vert_{L^r(\K)} = \sup_{\tilde{g}\in \tilde{\G}} \left|(\S-\S_m)\tilde{g}\right|, \label{eq:k-hatk:repr}
	\end{align}
	where $\tilde{\G}:=\left\{\tilde{g}_g: g\in \G\right\}$, $\tilde{g}_g(\bo) = \int_{\K\times \K} g(\b{x},\b{y}) \cos(\bo^T(\b{x}-\b{y})) \d \b{x} \d \b{y}$ and $\tilde{g}_g$ is continuous. 
	Hence, using \eqref{eq:k-hatk:repr} with the symmetrization lemma \cite[Lemma~2.3.1]{Vaart-96} and  \eqref{eq:norm-repr}, we have 
	\begin{eqnarray}
	  \lefteqn{\bb{E}_{\bo_{1:m}}\Vert k-\hat{k}\Vert_{L^r(\K)} \le 2 \E_{\bo_{1:m}} \E_{\bm{\varepsilon}} \sup_{\tilde{g}\in \tilde{\G}} \left|\frac{1}{m} \sum_{i=1}^m \varepsilon_i \tilde{g}(\bo_i)\right|
	   = \frac{2}{m} \E_{\bo_{1:m}} \E_{\bm{\varepsilon}} \sup_{g\in \G} \left| \sum_{i=1}^m \varepsilon_i \int_{\K\times \K} g(\b{x},\b{y}) \cos\left(\bo_i^T(\b{x}-\b{y})\right) \d \b{x} \d \b{y} \right|}\nonumber\\
	  &&\hspace*{-0.5cm} = \frac{2}{m} \E_{\bo_{1:m}} \E_{\bm{\varepsilon}} \sup_{g\in \G} \left|  \int_{\K\times \K} g(\b{x},\b{y}) \left[\sum_{i=1}^m \varepsilon_i  \cos\left(\bo_i^T(\b{x}-\b{y})\right)\right] \d \b{x} \d \b{y} \right|
	  = \frac{2}{m}\bb{E}_{\bo_{1:m}}\bb{E}_{\bm{\varepsilon}}\left\Vert \sum^m_{i=1}\varepsilon_i\cos(\langle\bo_i,\cdot-\cdot\rangle)\right\Vert_{L^r(\K)},\label{Eq:symm}
	\end{eqnarray}
	where $(\varepsilon_i)^m_{i=1}$ is a Rademacher sequence and $\bb{E}_{\bm{\varepsilon}}$ is the conditional expectation w.r.t.~$(\varepsilon_i)^m_{i=1}$ with $(\bo_i)^m_{i=1}$ being the conditioning random variables.
	Notice that the  measurability of $\tilde{g}_{g}$-s with the countable cardinality of $\tilde{\G}$ enabled us to write expectations instead of outer expectations in \cite[Lemma~2.3.1, page 108-110]{Vaart-96}, and hence in Eq.~\eqref{Eq:symm}.\vspace{.5mm}\\
  \item \tb{Bounding $\bb{E}_\varepsilon\left\Vert \sum^m_{i=1}\varepsilon_i\cos(\langle\bo_i,\cdot-\cdot\rangle)\right\Vert_{L^r(\K)}$ by the type of $L^r(\K)$}:
	\begin{align}
	  \bb{E}_ {\bm{\varepsilon}}\left\Vert \sum^m_{i=1}\varepsilon_i\cos(\langle\bo_i,\cdot-\cdot\rangle)\right\Vert_{L^r(\K)} & \stackrel{(*)}{\le} C^{\prime}_{r}\left(\sum^m_{i=1}\Vert \cos(\langle\bo_i,\cdot-\cdot\rangle)\Vert^{\min\{r,2\}}_{L^r(\K)}\right)^{\frac{1}{\min\{r,2\}}}
	  \le C^{\prime}_{r}\text{vol}^{2/r}(\K)m^{\max\{\frac{1}{2},\frac{1}{r}\}},\label{Eq:type}
	\end{align}
	since $L^r(\K)$ is of type $\min(2,r)$ \cite[page~73]{lindenstrauss79classical} and there exists a universal constant $C^{\prime}_{r}$ independent of $\K$ (the so-called Khintchine constant) \cite[page~247]{ledoux02probability} such that $(*)$ holds; in addition we 
      used 
      \begin{align*}
	\sum^m_{i=1}\Vert \cos(\langle\bo_i,\cdot-\cdot\rangle)\Vert^{\min\{2,r\}}_{L^r(\K)}  
	  = \sum_{i=1}^m \left(\int_{\K\times \K}\left|\cos(\bo_i^T(\b{x}-\b{y}))\right|^r \d \b{x} \d \b{y}\right)^{\frac{\min\{2,r\}}{r}}
	    \le m\left[\text{vol}^2(\K)\right]^{\frac{\min\{2,r\}}{r}},
      \end{align*}
      and $\frac{1}{\min\{2,r\}} = \max\left\{\frac{1}{2},\frac{1}{r}\right\}$.

\end{compactitem}
Combining (\ref{Eq:mcd1})--(\ref{Eq:type}) and using the bound on $\text{vol}(\K)$ given in the proof of Corollary~\ref{cor:lr} 
yields the result.\qed

\subsection{Proof of Theorem~\ref{theorem:rates:k-and-derivatives-unboundedS}} \label{sec:proof:derivatives:suppS-unbounded}
Below we give the detailed proof of Theorem~\ref{theorem:rates:k-and-derivatives-unboundedS}. At high-level the proof goes as follows: 
\emph{(i)} By the compactness of $\K_{\Delta}$ (implied by that of $\K$) one can take an $r$-net covering $\K_{\Delta}$ (for any $r>0$). \emph{(ii)} Small approximation error can be guaranteed at the 
centers of the $r$-net by Bernstein's inequality 
combined with a union bound. \emph{(iii)} Propagation of the error from the centers to arbitrary points
is achieved by Lipschitzness. \emph{(iv)} The Lipschitz constant is, however, a random quantity and we show with high probability that it is `not too large'. \emph{(v)} Union bounding the two events 
(small errors at the centers and small Lipschitz constant) leads to a uniform bound for arbitrary $r$, which holds with high probability. \emph{(vi)} Optimizing over $r$ gives the stated result. 

Formally, the proof is as follows. Let us define  $$B_{\b{p},\b{q},\K}:= \E_{\bo\sim \S}\left[\sup_{\b{z}\in conv(\K_{\Delta})} \left\| \nabla_{\b{z}} f(\b{z};\bo) \right\|_2\right],$$ where 
$f(\b{z};\bo)= \p^{\b{p},\b{q}}k(\b{z}) - \bo^{\b{p}}(-\bo)^{\b{q}} h_{|\b{p}+\b{q}|}\left(\bo^T\b{z}\right)$. Let us notice that since $conv(\K_{\Delta})$ is compact (by the compactness of $\K_{\Delta}$, implied by that of $\K$) and $\b{z} \mapsto \left\|\nabla_{\b{z}}f(\b{z};\bo)\right\|_2$ is continuous, the supremum inside the expectation in $B_{\b{p},\b{q},\K}$ is finite for any $\bo$.
    
\begin{compactitem}
    \item \tb{Covering of $\K_{\Delta}$:} By the compactness of $\K_{\Delta}$ there exist an 
	$r$-net with at most 
	\begin{equation}
	N=\left(\frac{2|\K_{\Delta}|}{r}+1\right)^d \le \left(\frac{4|\K|}{r}+1\right)^d\label{eq:cover-nn}
	\end{equation} balls covering $\K_{\Delta}$ \cite[Lemma~2.5, page~20]{geer09empirical}, 
	where we used that $|\K_{\Delta}|\le 2|\K|$. Let us denote the centers of this $r$-net by 
	$\b{c}_1, \ldots, \b{c}_N$.
    \item \tb{Bounding $\bar{f}(\b{b};\bo_{1:m}) - \bar{f}(\b{a};\bo_{1:m})$},  \tb{where $\b{a}, \b{b}\in \K_{\Delta}$; $\bo_{1:m}=(\bo_1,\ldots,\bo_m)$ is fixed:} Let 
      \begin{align*}
	  \bar{f}(\b{z};\bo_{1:m}) = \frac{1}{m} \sum_{j=1}^m f(\b{z};\bo_j) = \frac{1}{m}\sum_{j=1}^m \left[\p^{\b{p},\b{q}}k(\b{z}) - \bo_j^{\b{p}}(-\bo_j)^{\b{q}} h_{|\b{p}+\b{q}|}\left(\bo_j^T\b{z}\right)\right].
      \end{align*}
       $\b{z}\mapsto \bar{f}(\b{z};\bo_{1:m})$ is continuously differentiable since $\psi$ is so. Thus by the mean value theorem 
       $\exists\,t\in(0,1)$ such that
       $$\bar{f}(\b{b};\bo_{1:m}) - \bar{f}(\b{a};\bo_{1:m}) = \left \langle \nabla_{\b{z}}\bar{f}(t\b{a}+(1-t)\b{b};\bo_{1:m}), \b{b}-\b{a}\right\rangle.$$
    Hence by the Cauchy-Bunyakovsky-Schwarz inequality, we get
    \begin{align}
     |\bar{f}(\b{b};\bo_{1:m}) - \bar{f}(\b{a};\bo_{1:m})| & \le \left\|\nabla_{\b{z}}\bar{f}(t\b{a}+(1-t)\b{b};\bo_{1:m})\right\|_2 \left\|\b{b}-\b{a}\right\|_2
     \le \sup_{\b{z}\in conv(\K_{\Delta})} \left\|\nabla_{\b{z}}\bar{f}(\b{z};\bo_{1:m})\right\|_2 \left\|\b{b}-\b{a}\right\|_2 \nonumber\\
     &=: L(\bo_{1:m}) \left\|\b{b}-\b{a}\right\|_{2},
	  \label{eq:gbar-diff}
    \end{align}
    where we used the compactness of $conv(\K_{\Delta})$ (implied by that of $\K_{\Delta}$) and the continuity of the $\b{z}\mapsto \left\|\nabla_{\b{z}}\bar{f}(\b{z};\bo_{1:m})\right\|_2$ mapping to guarantee that $L(\bo_{1:m})$ exists, and it is finite for any $\bo_{1:m}$.\vspace{.5mm}\\
\item \tb{Bound  on $\E_{\bo_1,\ldots,\bo_m}[L(\bo_{1:m})]$:} Using the definition of $\bar{f}(\b{z};\bo_{1:m})$, the linearity of differentiation, and the triangle inequality, we get
    \begin{align*}
      \left\|\nabla_{\b{z}}\bar{f}(\b{z};\bo_{1:m})\right\|_2 &= \left\|\nabla_{\b{z}}\left[\frac{1}{m}\sum_{j=1}^m f(\b{z};\bo_j)\right]\right\|_2
         = \left\|\frac{1}{m}\sum_{j=1}^m\nabla_{\b{z}} f(\b{z};\bo_j) \right\|_2 \le \frac{1}{m}\sum_{j=1}^m \left\|\nabla_{\b{z}} f(\b{z};\bo_j)\right\|_2.
    \end{align*}
    Therefore, $$\sup_{\b{z}\in conv(\K_{\Delta})} \left\|\nabla_{\b{z}}\bar{f}(\b{z};\bo_{1:m})\right\|_2 \le \frac{1}{m}\sum_{j=1}^m \sup_{\b{z}\in conv(\K_{\Delta})} \left\|\nabla_{\b{z}}f(\b{z};\bo_j)\right\|_2$$
    and 
   \begin{align}
     \E_{\bo_{1:m}}[L(\bo_{1:m})] & =  \E_{\bo_{1:m}}\left[\sup_{\b{z}\in conv(\K_{\Delta})} \left\|\nabla_{\b{z}}f(\b{z};\bo_{1:m})\right\|_2\right]
			 \le \frac{1}{m}\sum_{j=1}^m \E_{\bo_{1:m}}\left[\sup_{\b{z}\in conv(\K_{\Delta})} \left\|\nabla_{\b{z}} f(\b{z};\bo_j)\right\|_2\right]\nonumber\\
		      &= \frac{1}{m} \sum_{j=1}^m B_{\b{p},\b{q},\K} = B_{\b{p},\b{q},\K}.\label{eq:L-1}
    \end{align}
 \item \tb{Bound on $B_{\b{p},\b{q},\K}$}: 
 Note that
    \begin{align}
	\sup_{\b{z}\in conv(\K_{\Delta})} \left\|\nabla_{\b{z}} f(\b{z};\bo)\right\|_2 &= \sup_{\b{z}\in conv(\K_{\Delta})} \left\|\nabla_{\b{z}}\left[ \p^{\b{p},\b{q}}k(\b{z})-\bo^{\b{p}}(-\bo)^{\b{q}} h_{|\b{p}+\b{q}|}\left(\bo^T\b{z}\right) \right] \right\|_2\nonumber\\
	&\le \sup_{\b{z}\in conv(\K_{\Delta})}\left( \left\|\nabla_{\b{z}}\left[ \p^{\b{p},\b{q}}k(\b{z}) \right]\right\|_2 + \left\|\nabla_{\b{z}}\left[\bo^{\b{p}}(-\bo)^{\b{q}} h_{|\b{p}+\b{q}|}\left(\bo^T\b{z}\right) \right] \right\|_2 \right)\nonumber\\
        &\le \sup_{\b{z}\in conv(\K_{\Delta})} \left\|\nabla_{\b{z}}\left[ \p^{\b{p},\b{q}}k(\b{z}) \right]\right\|_2 + \sup_{\b{z}\in conv(\K_{\Delta})}\left\|\nabla_{\b{z}}\left[\bo^{\b{p}}(-\bo)^{\b{q}} h_{|\b{p}+\b{q}|}\left(\bo^T\b{z}\right) \right] \right\|_2\nonumber\\
	&= D_{\b{p},\b{q},\K} + \sup_{\b{z}\in conv(\K_{\Delta})}\left\|\nabla_{\b{z}}\left[\bo^{\b{p}}(-\bo)^{\b{q}} h_{|\b{p}+\b{q}|}\left(\bo^T\b{z}\right) \right] \right\|_2.\label{eq:kder:B1}
    \end{align}
    By the homogenity of norms ($\left\|a\b{v}\right\| = |a|\left\|\b{v}\right\|$), the chain rule, and $|h_a(v)|\le 1$ ($\forall a$, $\forall v$)
    \begin{align}
	\left\|\nabla_{\b{z}}\left[\bo^{\b{p}}(-\bo)^{\b{q}} h_{|\b{p}+\b{q}|}\left(\bo^T\b{z}\right) \right] \right\|_2 &= |\bo^{\b{p}+\b{q}}|  \left\| h_{|\b{p}+\b{q}|+1}\left(\bo^T\b{z}\right)\bo \right\|_2  \le |\bo^{\b{p}+\b{q}}| \left\|\bo \right\|_2. \label{eq:kder:B2}
    \end{align}
    Combining Eq.~\eqref{eq:kder:B1} and \eqref{eq:kder:B2} results in the bound
    \begin{align}
     B_{\b{p},\b{q},\K} = \E_{\bo\sim\S}\left[\sup_{\b{z}\in conv(\K_{\Delta})} \left\|\nabla_{\b{z}} f(\b{z};\bo)\right\|_2\right] & \le D_{\b{p},\b{q},\K} + \E_{\bo\sim \S}\left[|\bo^{\b{p}+\b{q}}| \left\|\bo \right\|_2\right]
     = D_{\b{p},\b{q},\K} + E_{\b{p},\b{q}}. \label{eq:L-2}
    \end{align}
\item \tb{Error propagation from the net centers:} We will use the following note to propagate the error from the net centers ($\b{c}_j$, $j=1,\ldots,N$) to an arbitrary $\b{z}\in \K_{\Delta}$ point.
		  Note: If $|\bar{f}(\b{c}_j;\bo_{1:m})|<\frac{\epsilon}{2}$ ($\forall j$) and $L(\bo_{1:m})<\frac{\epsilon}{2r}$, then 
			      \begin{align}
				|\bar{f}(\b{z};\bo_{1:m})| &< \epsilon \quad (\forall \b{z} \in \K_{\Delta}). \label{eq:gbar-small}
			      \end{align}
		  Indeed
			    \begin{align*}
			      \big| |\bar{f}(\b{z};\bo_{1:m})| - \underbrace{|\bar{f}(\b{c}_j;\bo_{1:m})|}_{<\frac{\epsilon}{2}} \big| &\le |\bar{f}(\b{z};\bo_{1:m}) - \bar{f}(\b{c}_j;\bo_{1:m})| \le \underbrace{L(\bo_{1:m})}_{<\frac{\epsilon}{2r}} \underbrace{\left\| \b{z}- \b{c}_j\right\|_2}_{\le r} <  \frac{\epsilon}{2},
			    \end{align*}
			    where we used \eqref{eq:gbar-diff} and our assumptions in the note, thereby yielding \eqref{eq:gbar-small}.
\item \tb{Guaranteeing the conditions of \eqref{eq:gbar-small} with high probability:}
    \begin{compactitem}
	\item Notice that $\E_{\bo\sim \S}[f(\b{z};\bo)]=\b{0}$ ($\forall  \b{z}$). Also since \eqref{eq:moment-constraint-on-g} holds, applying Bernstein's inequality for the 
	individual $\b{c}_j$ points (Lemma~\ref{lemma:Bernstein}; $\xi_n:=f(\b{c}_j;\bo_n)$, $n=1,\ldots,m$; $S:=\sqrt{m}\sigma$) gives that for any $\eta>0$
	      \begin{align}
		  \S^m\left( |\bar{f}(\b{c}_j;\bo_{1:m})| \ge  \frac{\eta \sigma}{\sqrt{m}}\right) \le e^{-\frac{1}{2}\frac{\eta^2}{1+\frac{\eta L}{\sqrt{m}\sigma}}}. \label{eq:Bernstein-eta}
	      \end{align}
	      Setting $\epsilon=\frac{2\eta\sigma}{\sqrt{m}}$, (\ref{eq:Bernstein-eta}) is written as
	      \begin{align*}
		\S^m\left( |\bar{f}(\b{c}_j;\bo_{1:m})| <\frac{\epsilon}{2}\right) \ge  1 - e^{-\frac{1}{2}\frac{\left(\frac{\sqrt{m}\epsilon}{2 \sigma}\right)^2}{1+\frac{\frac{\sqrt{m}\epsilon}{2 \sigma} L}{\sqrt{m}\sigma}}}
		    = 1 - e^{- \frac{m\epsilon^2}{8 \sigma^2 \left(1+\frac{\epsilon L}{2\sigma^2}\right)}}.
	      \end{align*}
    	  By union bounding ($j=1,\ldots,N$), we get
					\begin{align}
					      \S^m\left(\cap_{j=1}^N \left\{|\bar{f}(\b{c}_j;\bo_{1:m})| < \frac{\epsilon}{2}\right\} \right) & \ge 1- N e^{-\frac{m\epsilon^2}{8 \sigma^2 \left(1+\frac{\epsilon L}{2\sigma^2}\right)}}. \label{eq:condition-1:A}
					  \end{align}

	\item \tb{Condition $L(\bo_{1:m})<\frac{\epsilon}{2r}$}:  Applying Markov's inequality to $L(\bo_{1:m})$ (note that $L(\bo_{1:m})$ is non-negative), for any $t>0$, we obtain
	$$\S^m\left(L(\bo_{1:m})\ge t\right) \le \frac{\E_{\bo_1,\ldots,\bo_m}\left[L(\bo_{1:m})\right]}{t} \le \frac{D_{\b{p},\b{q},\K} + E_{\b{p},\b{q}}}{t},$$
	by invoking \eqref{eq:L-1} and \eqref{eq:L-2}. Choosing $t=\frac{\epsilon}{2r}$, we have
	\begin{equation}
	 \S^m\left(L (\bo_{1:m})< \frac{\epsilon}{2r}\right) \ge 1-\frac{2r}{\epsilon} (D_{\b{p},\b{q},\K} + E_{\b{p},\b{q}}). \label{eq:condition-2} 
	\end{equation}
    \end{compactitem}
 \item  \tb{Final bound for any $r>0$}: By \eqref{eq:condition-1:A} and \eqref{eq:condition-2}, and substituting the explicit form of $N$ in (\ref{eq:cover-nn}), we get
 	    \begin{align}
				  \S^m \left( \sup_{\b{z}\in  \K_{\Delta}} |\bar{f}(\b{z};\bo_{1:m})| < \epsilon\right) &\ge \S^m\left(\left\{L(\bo_{1:m})< \frac{\epsilon}{2r}\right\} \bigcap \cap_{j=1}^N \left\{|\bar{f}(\b{c}_j;\bo_{1:m})| < \frac{\epsilon}{2}\right\} \right)\nonumber\\
				  &\ge 1- \left(\frac{4|\K|}{r}+1\right)^d e^{-\frac{m\epsilon^2}{8 \sigma^2 \left(1+\frac{\epsilon L}{2\sigma^2}\right)}} -  \frac{2r}{\epsilon} (D_{\b{p},\b{q},\K} + E_{\b{p},\b{q}})
				  \stackrel{(\dagger)}{\ge} 1-c_*-\kappa_1r^{-d}-\kappa_2r,\label{eq:r-matching}
\end{align}
      where we invoked the 
      \begin{align*}
	  \left(\frac{4|\K|}{r}+1\right)^d & = \left[2\left(\frac{\frac{4|\K|}{r}}{2}+\frac{1}{2}\right)\right]^d = 2^d  \left(\frac{\frac{4|\K|}{r}}{2}+\frac{1}{2}\right)^d  \stackrel{(\dagger)}{\le} 2^d \frac{1}{2}
	    \left[\left(\frac{4|\K|}{r}\right)^d + 1^d\right] = 2^{d-1} \left[ \left(\frac{4|\K|}{r}\right)^d + 1 \right]
      \end{align*}
      Jensen's inequality in ($\dagger$), $c_*:=2^{d-1}e^{-\frac{m\epsilon^2}{8 \sigma^2 \left(1+\frac{\epsilon L}{2\sigma^2}\right)}}$, $\kappa_1:=4^{d}|\K|^dc_*$ and $\kappa_2=\frac{2}{\epsilon} (D_{\b{p},\b{q},\K} + E_{\b{p},\b{q}})$.\vspace{.5mm}\\
  \item \tb{Matching the two terms to choose $r$}: 
      Maximizing w.r.t.~$r$ in (\ref{eq:r-matching})
      \begin{align*}
	f(r) &= \kappa_1 r^{-d} + \kappa_2 r \Rightarrow f'(r) = \kappa_1 (-d)r^{-d-1} + \kappa_2 =0 \Rightarrow  \frac{d \kappa_1}{\kappa_2} = r^{d+1}
      \end{align*}
      we note that $r = \left( \frac{d\kappa_1}{\kappa_2} \right)^{\frac{1}{d+1}}$ maximizes it.
      Using this in (\ref{eq:r-matching}), we have 
      \begin{align*}
       \S^m \left( \sup_{\b{z}\in  \K_{\Delta}} |\bar{f}(\b{z};\bo_{1:m})| \ge \epsilon\right)& \le c_* +\kappa_1 \left( \frac{d\kappa_1}{\kappa_2} \right)^{-\frac{d}{d+1}} + \kappa_2 \left( \frac{d\kappa_1}{\kappa_2} \right)^{\frac{1}{d+1}}
	    =c_*+F_d\kappa_1^{\frac{1}{d+1}} \kappa_2^{\frac{d}{d+1}}\\ 
	    &= 2^{d-1}e^{-\frac{m\epsilon^2}{8 \sigma^2 \left(1+\frac{\epsilon L}{2\sigma^2}\right)}} + F_d \left[2^{3d-1}|\K|^d e^{-\frac{m\epsilon^2}{8 \sigma^2 \left(1+\frac{\epsilon L}{2\sigma^2}\right)}} \right]^{\frac{1}{d+1}} \left[ \frac{2}{\epsilon} (D_{\b{p},\b{q},\K} + E_{\b{p},\b{q}}) \right]^{\frac{d}{d+1}}\\
	    &= 2^{d-1}e^{-\frac{m\epsilon^2}{8 \sigma^2 \left(1+\frac{\epsilon L}{2\sigma^2}\right)}} + F_d 2^{\frac{4d-1}{d+1}} \left[ \frac{|\K|(D_{\b{p},\b{q},\K} + E_{\b{p},\b{q}})}{\epsilon} \right]^{\frac{d}{d+1}} e^{-\frac{m\epsilon^2}{8(d+1) \sigma^2 \left(1+\frac{\epsilon L}{2\sigma^2}\right)}},
      \end{align*}
      where $F_d:=d^{-\frac{d}{d+1}}+d^{\frac{1}{d+1}}$.
\end{compactitem}

\subsection{Proof of bounded $\text{supp}(\S)$ $\Rightarrow$ \eqref{eq:moment-constraint-on-g}}\label{lemma:supp(S)-bounded=>moment-constraint-is-OK}
We prove that the boundedness of $\text{supp}(\S)$ implies that of $f$ [see \eqref{eq:g-def}], specifically \eqref{eq:moment-constraint-on-g}.

\emph{Proof}: Indeed, let
\begin{align}
  f(\b{z};\bo) &= \p^{\b{p},\b{q}}k(\b{z}) - \bo^{\b{p}}(-\bo)^{\b{q}} h_{|\b{p}+\b{q}|}\left(\bo^T\b{z}\right)
  = \left[\int_{\R^d} \bo^{\b{p}}(-\bo)^{\b{q}} h_{|\b{p}+\b{q}|}\left(\bo^T\b{z}\right)\d\S(\bo)\right] - \bo^{\b{p}}(-\bo)^{\b{q}} h_{|\b{p}+\b{q}|}\left(\bo^T\b{z}\right).\label{eq:g-def}
\end{align}
Applying the triangle inequality and $|h_a(v)| \le 1$ ($\forall a, \forall v$) we have 
\begin{align*}
   |f(\b{z};\bo)| &\le \left|\int_{\R^d} \b{s}^{\b{p}}(-\b{s})^{\b{q}} h_{|\b{p}+\b{q}|}\left(\b{s}^T\b{z}\right)\d\S(\b{s})\right| + \left|\bo^{\b{p}}(-\bo)^{\b{q}} h_{|\b{p}+\b{q}|}\left(\bo^T\b{z}\right)\right|
		  \le \int_{\R^d} \left| \b{s}^{\b{p}}(-\b{s})^{\b{q}} h_{|\b{p}+\b{q}|}\left(\b{s}^T\b{z}\right)\right| \d\S(\b{s}) + \left|\bo^{\b{p}+\b{q}}\right| \nonumber\\
		  &\le \int_{\R^d} \left| \b{s}^{\b{p}+\b{q}}\right| \d\S(\b{s}) + \left|\bo^{\b{p}+\b{q}}\right| =
		  \int_{\text{supp}(\S)} \left| \b{s}^{\b{p}+\b{q}}\right| \d\S(\b{s}) + \left|\bo^{\b{p}+\b{q}}\right| \le 2 \sup_{\b{s}\in \text{supp}(\S)}|\b{s}^{\b{p}+\b{q}}|.
\end{align*}
$K:= \sup_{\b{s}\in \text{supp}(\S)}|\b{s}^{\b{p}+\b{q}}|$ is finite since $\text{supp}(\S)$ is bounded, thus $|f(\b{z};\bo)|$ is bounded.

\section{Supplementary results}\label{sec:suppl:lemmas}
In this section, we present some technical results that are used in
the proofs.\vspace{1mm}\\

\begin{appxlem}[McDiarmid Inequality \cite{taylor04kernel}]\label{lemma:McDiarmid}
Let $(X_i)^m_{i=1}$ be $\Cal{X}$-valued independent random variables. 
Suppose $f: \Cal{X}^m\rightarrow \R$ satisfies the bounded difference property,
\begin{equation}\sup_{u_1,\ldots,u_m,u_r'\in \Cal{X}}\left|f(u_1,\ldots,u_m) - f(u_1,\ldots,u_{r-1},u_r',u_{r+1},\ldots,u_m)\right| \le c_r\,\, (\forall r=1,\ldots,m).\label{eq:bdd}\end{equation}
Then for any $\epsilon>0$, 
\begin{align*}
  \P\left( f(X_1,\ldots,X_m) - \E\left[f(X_1,\ldots,X_m)\right]\ge \epsilon\right) & \le e^{-\frac{2\epsilon^2}{\sum_{r=1}^mc_r^2}}.
\end{align*}
Note: specifically, if $c=c_r$ $(\forall r)$ then applying a $\tau=\frac{2\epsilon^2}{\sum_{r=1}^mc_r^2}=\frac{2\epsilon^2}{mc^2} \Leftrightarrow \epsilon = c \sqrt{\frac{\tau m}{2}}$ reparameterization one gets $\P\left( f(X_1,\ldots,X_m) < \E\left[f(X_1,\ldots,X_m)\right] +  c \sqrt{\frac{\tau m}{2}}\right) \ge 1-e^{-\tau}$.\vspace{1mm}\\
\end{appxlem}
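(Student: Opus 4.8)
The plan is to prove this by the standard martingale method of bounded differences (the Azuma--Hoeffding approach). First I would center the quantity of interest, writing $V := f(X_1,\ldots,X_m) - \E[f(X_1,\ldots,X_m)]$, so that the target becomes the upper-tail estimate $\P(V\ge\epsilon)$. To exploit the product structure of the law of $(X_i)_{i=1}^m$, I would introduce the Doob martingale of $f$ relative to the natural filtration $\Cal{F}_k := \sigma(X_1,\ldots,X_k)$, namely $Z_k := \E[f(X_1,\ldots,X_m)\mid\Cal{F}_k]$ for $k=0,1,\ldots,m$. By construction $Z_0=\E[f]$ and $Z_m=f(X_1,\ldots,X_m)$, so that $V=\sum_{k=1}^m D_k$ with increments $D_k:=Z_k-Z_{k-1}$ satisfying $\E[D_k\mid\Cal{F}_{k-1}]=0$.

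The crux is to show that the bounded-difference hypothesis \eqref{eq:bdd} controls the conditional range of each increment. Using independence, $Z_k$ is the $\Cal{F}_{k-1}$-conditional average of $f$ over the remaining coordinates $X_{k+1},\ldots,X_m$, evaluated at the realized $X_k$; call the resulting function $g_k(X_1,\ldots,X_k)$. Then $D_k=g_k(X_1,\ldots,X_k)-\E_{X_k}[g_k(X_1,\ldots,X_k)\mid\Cal{F}_{k-1}]$, and because averaging over the tail coordinates preserves the oscillation of $f$ in its $k$-th argument, \eqref{eq:bdd} gives $\sup_{x,x'}|g_k(\cdot,x)-g_k(\cdot,x')|\le c_k$. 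Hence there is an $\Cal{F}_{k-1}$-measurable envelope $A_k$ with $A_k\le D_k\le A_k+c_k$ almost surely. This conditional range bound is the step I expect to be the main obstacle: it demands care with the conditioning and with verifying measurability of the envelope, rather than any hard inequality.

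With conditional boundedness in place, I would apply the Chernoff technique. For any $s>0$, Markov's inequality yields $\P(V\ge\epsilon)\le e^{-s\epsilon}\,\E[e^{sV}]$. I would bound the moment generating function by peeling off one increment at a time via the tower property, conditioning first on $\Cal{F}_{m-1}$, then $\Cal{F}_{m-2}$, and so on. For each step I invoke Hoeffding's lemma—a random variable with zero conditional mean and conditional range at most $c_k$ has conditional MGF at most $e^{s^2c_k^2/8}$—to obtain $\E[e^{sD_k}\mid\Cal{F}_{k-1}]\le e^{s^2c_k^2/8}$. Iterating gives $\E[e^{sV}]\le\exp\bigl(\tfrac{s^2}{8}\sum_{k=1}^m c_k^2\bigr)$.

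Finally I would optimize the free parameter. Combining the two estimates produces $\P(V\ge\epsilon)\le\exp\bigl(-s\epsilon+\tfrac{s^2}{8}\sum_{k=1}^m c_k^2\bigr)$, and minimizing the exponent over $s>0$ at $s=4\epsilon/\sum_{k=1}^m c_k^2$ gives exactly the claimed bound $\P(V\ge\epsilon)\le e^{-2\epsilon^2/\sum_{k=1}^m c_k^2}$. The reparameterization stated in the Note then follows immediately in the special case $c_k\equiv c$ by solving $\tau=2\epsilon^2/(mc^2)$ for $\epsilon$.
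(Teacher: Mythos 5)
Your proof is correct, and there is nothing in the paper to diverge from: the lemma is stated there without proof, quoted from \cite{taylor04kernel}, and the Doob-martingale/Azuma--Hoeffding argument you give (bounded conditional increments $A_k\le D_k\le A_k+c_k$, conditional Hoeffding lemma $\E[e^{sD_k}\mid\Cal{F}_{k-1}]\le e^{s^2c_k^2/8}$, Chernoff bound optimized at $s=4\epsilon/\sum_{r=1}^m c_r^2$) is exactly the canonical proof in that reference, and your algebra, including the reparameterization $\tau=2\epsilon^2/(mc^2)$ in the Note, checks out. The one step you rightly flag---measurability of the envelope $A_k=\inf_{x}g_k(X_1,\ldots,X_{k-1},x)-\E_{X_k}[g_k]$ for uncountable $\Cal{X}$---is harmless here: one may replace the infimum by a conditional essential infimum, and in this paper's applications $\Cal{X}\subseteq\R^d$ with $g_k$ continuous in its last argument, so the infimum over a countable dense set is measurable.
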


\begin{appxlem}\label{lemma:integral} 
For $a>1$,
$\int^1_0 \sqrt{\log\frac{a}{\epsilon}}\,\d\epsilon\le \sqrt{\log a}+\frac{1}{2\sqrt{\log a}}$.\vspace{-1mm}
\end{appxlem}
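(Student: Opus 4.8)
The plan is to reduce the integral to a standard incomplete-Gamma-type tail integral by the substitution $u=\log\frac{a}{\epsilon}$. Since $a>1$, set $b:=\log a>0$; then $\epsilon=a e^{-u}$, $\d\epsilon=-a e^{-u}\,\d u$, and the range $\epsilon\in(0,1]$ becomes $u\in[b,\infty)$. First I would verify this change of variables carefully, tracking the orientation of the limits, to obtain
$$\int_0^1 \sqrt{\log\tfrac{a}{\epsilon}}\,\d\epsilon = a\int_b^\infty \sqrt{u}\,e^{-u}\,\d u.$$

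Next I would integrate by parts on $\int_b^\infty \sqrt{u}\,e^{-u}\,\d u$, differentiating the factor $\sqrt{u}$ and integrating $e^{-u}\,\d u$. The boundary term at infinity vanishes (exponential decay beats the square root), the boundary term at $u=b$ contributes $\sqrt{b}\,e^{-b}$, and there remains the residual integral $\frac{1}{2}\int_b^\infty u^{-1/2}e^{-u}\,\d u$.

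The key estimate is then to control this residual by a crude monotonicity bound: since $u^{-1/2}\le b^{-1/2}$ for all $u\ge b$, I can pull the constant out and integrate the exponential, giving $\frac{1}{2}\int_b^\infty u^{-1/2}e^{-u}\,\d u \le \frac{1}{2\sqrt{b}}\,e^{-b}$. Combining the two pieces yields
$$\int_b^\infty \sqrt{u}\,e^{-u}\,\d u \le e^{-b}\left(\sqrt{b}+\frac{1}{2\sqrt{b}}\right),$$
and multiplying through by $a=e^{b}$ cancels the exponential and produces exactly $\sqrt{\log a}+\frac{1}{2\sqrt{\log a}}$, recalling $b=\log a$.

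There is no serious obstacle here: the argument is elementary once the substitution is in place. The only point requiring mild care is the monotonicity bound $u^{-1/2}\le b^{-1/2}$, which is what generates the sufficient (if lossy) tail estimate and relies on $b>0$, i.e.\ precisely the hypothesis $a>1$ of the lemma.
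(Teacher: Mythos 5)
Your proof is correct and follows essentially the same route as the paper's own argument: the substitution $u=\log\frac{a}{\epsilon}$, integration by parts differentiating $\sqrt{u}$, and the monotonicity bound $u^{-1/2}\le (\log a)^{-1/2}$ on the residual tail integral, with the factor $a=e^{b}$ cancelling the exponential at the end. No gaps; nothing further is needed.
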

\begin{proof}
By change of variables, we have $\int^1_0 \sqrt{\log\frac{a}{\epsilon}}\,d\epsilon=a\int^\infty_{\log a}\sqrt{t}e^{-t}\,dt$. Applying partial integration, we have
$$\int^\infty_{\log a}\sqrt{t}e^{-t}\,\d t=[\sqrt{t}e^{-t}]^{\log a}_{\infty}+\int^\infty_{\log a}\frac{1}{2\sqrt{t}}e^{-t}\,\d t\le\frac{\sqrt{\log a}}{a}+\frac{1}{2\sqrt{\log a}}\int^\infty_{\log a}e^{-t}\,\d t=\frac{\sqrt{\log a}}{a}+\frac{1}{2a\sqrt{\log a}},$$
thereby yields the result.
\end{proof}

\begin{appxlem}[Bernstein inequality \cite{yurinsky95sums}]\label{lemma:Bernstein}
Let $\xi\in\R$ be a random variable, $\E_{\xi\sim \P}[\xi]=0$, and assume that $\exists L>0, S>0$ satisfying 
\begin{align*}
\sum_{j=1}^m\E_{\xi_j\sim \P}\left[|\xi_j|^M\right]\le \frac{M!S^2 L^{M-2}}{2}\quad (\forall M \ge 2),
\end{align*}
where $(\xi_j)_{j=1}^m\stackrel{i.i.d.}{\sim}\P$. 
Then for any $0<m\in\N$, $\eta>0$,
\begin{align*}
\P^m\left(\left|\sum_{j=1}^m \xi_j\right| \ge \eta S\right)  \le e^{-\frac{1}{2}\frac{\eta^2}{1+\frac{\eta L}{S}}}.
\end{align*}
\end{appxlem}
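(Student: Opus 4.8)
The plan is to prove the bound by the classical Chernoff (exponential moment) method, controlling the moment generating function of the sum through the Bernstein moment condition and then optimizing the free parameter. Throughout write $T:=\sum_{j=1}^m\xi_j$. First I would pass to a one-sided tail via Markov's inequality applied to the exponential: for every $t>0$,
$$\P^m\left(T\ge \eta S\right)=\P^m\left(e^{tT}\ge e^{t\eta S}\right)\le e^{-t\eta S}\,\E\left[e^{tT}\right]=e^{-t\eta S}\prod_{j=1}^m\E\left[e^{t\xi_j}\right],$$
where the last factorization uses independence of the $\xi_j$. The two-sided statement is recovered at the end by applying the same argument to $-\xi_j$ (whose moments coincide with those of $\xi_j$, since the hypothesis is phrased in terms of $|\xi_j|$) and combining the two tails.

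Next I would bound the log-moment generating function. Expanding the exponential and using $\E[\xi_j]=0$ together with $\E[\xi_j^M]\le\E[|\xi_j|^M]$ gives $\E[e^{t\xi_j}]\le 1+\sum_{M\ge 2}\frac{t^M}{M!}\E[|\xi_j|^M]$; then applying $\log(1+x)\le x$ and summing over $j$ yields
$$\log\prod_{j=1}^m\E\left[e^{t\xi_j}\right]\le\sum_{M\ge 2}\frac{t^M}{M!}\sum_{j=1}^m\E\left[|\xi_j|^M\right]\le\sum_{M\ge 2}\frac{t^M}{M!}\cdot\frac{M!\,S^2 L^{M-2}}{2}=\frac{S^2 t^2}{2(1-tL)},$$
where the second inequality is exactly the hypothesized moment condition and the final equality is the geometric series, valid precisely when $tL<1$. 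Hence $\P^m(T\ge\eta S)\le\exp\bigl(-t\eta S+\tfrac{S^2 t^2}{2(1-tL)}\bigr)$ for all $t\in(0,1/L)$.

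Finally I would choose $t$. Rather than formally minimizing the exponent, I would substitute the convenient near-optimal value $t=\eta/(S+\eta L)$, which indeed lies in $(0,1/L)$ since $tL=\eta L/(S+\eta L)<1$. A direct computation gives $1-tL=S/(S+\eta L)$, so that $-t\eta S+\frac{S^2t^2}{2(1-tL)}=-\frac{\eta^2 S}{S+\eta L}+\frac{S\eta^2}{2(S+\eta L)}=-\frac{1}{2}\frac{\eta^2}{1+\eta L/S}$, which is exactly the claimed exponent; combining the upper and lower tails (cf.~\cite{yurinsky95sums}) then gives the stated inequality.

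I expect the main obstacle to be the moment-generating-function step: one must verify that the series expansion is legitimate (the Bernstein condition guarantees $\E[e^{t\xi_j}]<\infty$ for $t<1/L$, so the tails of $\xi_j$ are controlled), that the per-variable bounds aggregate correctly into the summed moment condition, and that the explicit choice of $t$ is admissible (keeping $tL<1$) while reproducing the exact exponent rather than a weaker constant. The optimization over $t$ is where a sloppy choice would degrade the constant, so the payoff of the plan is verifying that $t=\eta/(S+\eta L)$ lands exactly on the advertised bound.
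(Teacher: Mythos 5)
Your Chernoff computation is the standard route and is carried out correctly for the one-sided tail: the justification of the series expansion of the moment generating function for $t<1/L$, the aggregation of the per-variable bounds into the summed moment hypothesis via $\log(1+x)\le x$, the geometric series giving $\frac{S^2t^2}{2(1-tL)}$, and the substitution $t=\eta/(S+\eta L)$ all check out and land exactly on the exponent $-\frac{1}{2}\eta^2/(1+\eta L/S)$. Since the paper gives no proof of this lemma --- it is quoted from Yurinsky's book, where it is likewise obtained from exponential moment bounds --- your argument is essentially the intended one.

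The genuine gap is your final sentence. Combining the two one-sided tails by a union bound yields $\P^m\left(\left|\sum_{j=1}^m\xi_j\right|\ge\eta S\right)\le 2\,e^{-\frac{1}{2}\frac{\eta^2}{1+\eta L/S}}$, and the factor $2$ cannot be argued away: the two-sided inequality as printed is in fact false. Take $m=1$ and $\xi$ uniform on $\{-a,+a\}$; then $\E\left[|\xi|^M\right]=a^M\le\frac{M!}{2}a^{M}$ for all $M\ge 2$, so the hypothesis holds with $S=L=a$, yet for $\eta=1$ the left-hand side is $\P(|\xi|\ge a)=1$ while the claimed bound is $e^{-1/4}<1$. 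So ``combining the upper and lower tails'' is not a step that can be deferred to the reference: what your derivation actually establishes is the corrected statement with prefactor $2$ (equivalently, the two separate one-sided bounds), and that is what should be stated. This is harmless downstream --- carrying the extra factor of $2$ through the proof of Theorem~\ref{theorem:rates:k-and-derivatives-unboundedS} only perturbs the constants there (e.g., $2^{d-1}$ becomes $2^{d}$ in the first term of \eqref{eq:unif-rates-for-derivatives-unboundedS}) and affects none of the conclusions --- but as a proof of the lemma verbatim, the last step fails and must fail.
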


\begin{appxlem}[$L^r$ norm as countable supremum]\label{lemma:Lr-as-countable-sup}
Assume that $1< \tilde{r} < \infty$. If $(X,\mathscr{A},\mu)$, $\mu(X)<\infty$, $\frac{1}{r}+ \frac{1}{\tilde{r}} = 1$, then 
$\left[L^{\tilde{r}}(X,\mathscr{A},\mu)\right]^* = \left\{F_f: f\in L^{r}(X,\mathscr{A},\mu)\right\}$, where $F_f(u)  = \int_X uf \d\mu$,
and $\left\|f\right\|_{L^{r}} = \left\|F_f\right\|(=\sup_{\left\|g\right\|_{L^{\tilde{r}}}=1}|F_f(g)|)$; see \cite[Theorem~4.1]{stein11functional}. Specifically, if 
$X=\K\subseteq \R^d$ compact and it is endowed with the Borel $\sigma$-algebra, then by the separability of $\K$, $L^{\tilde{r}}(\K)$ is also separable \cite[Prop.~3.4.5]{cohn13measure} since the Borel $\sigma$-algebra is countably generated \cite[page~17 (vol.~2)]{bogachev07measure}, thus there exists a countable 
$\G\subseteq L^{\tilde{r}}(\K)$, \cite[Lemma 6.7]{carothers04short} such that $\left\|g\right\|_{ L^{\tilde{r}(\K)}}=1$ ($\forall g\in \G$) and $\left\|F_f\right\| =  \sup_{g\in \G}|F_f(g)|$.

Note: the $\sigma$-algebra of \emph{Lebesgue} measurable sets is typically \emph{not} countably generated \cite[page~106 (vol.~I)]{bogachev07measure}.
 \end{appxlem}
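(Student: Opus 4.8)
The plan is to assemble the statement from three standard functional-analytic ingredients and to verify that the hypotheses of each are met in our setting. First I would prove the general duality claim over an arbitrary finite measure space, then specialize to $X=\K$ to obtain separability of $L^{\tilde r}(\K)$, and finally exploit that separability to replace the supremum over the entire unit sphere of $L^{\tilde r}(\K)$ by a supremum over a fixed countable set.

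For the duality claim I would invoke the Riesz representation theorem for $L^p$ spaces: since $\mu$ is finite (hence $\sigma$-finite) and $1<\tilde r<\infty$, the map $f\mapsto F_f$ with $F_f(u)=\int_X uf\,\d\mu$ is an isometric isomorphism of $L^r(X,\mathscr A,\mu)$ onto $[L^{\tilde r}(X,\mathscr A,\mu)]^*$, where $\tfrac1r+\tfrac1{\tilde r}=1$. The isometry part is exactly the identity $\|f\|_{L^r}=\|F_f\|=\sup_{\|g\|_{L^{\tilde r}}=1}|F_f(g)|$. This is \cite[Theorem~4.1]{stein11functional}, so at this step there is nothing to establish beyond checking $\sigma$-finiteness of $\mu$ and $\tilde r\in(1,\infty)$, both of which are assumed.

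The separability step is where I would exercise the most care, and it is the real content of the specialization. I would argue along the chain: $\K$ is a compact subset of $\R^d$, hence a separable metric space; the Borel $\sigma$-algebra of a separable metric space is countably generated, e.g.\ by the balls with rational centres and radii \cite[page~17 (vol.~2)]{bogachev07measure}; and for a finite measure on a countably generated $\sigma$-algebra, $L^{\tilde r}$ with $1\le\tilde r<\infty$ is separable \cite[Prop.~3.4.5]{cohn13measure}. The essential subtlety, and the reason the lemma insists on the Borel $\sigma$-algebra, is that this chain breaks if one instead uses the $\sigma$-algebra of Lebesgue measurable sets, which is typically not countably generated \cite[page~106 (vol.~I)]{bogachev07measure}; I would flag this explicitly so the separability conclusion is not transported into the wrong category.

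Finally, given separability of $L^{\tilde r}(\K)$, its unit sphere $S:=\{g:\|g\|_{L^{\tilde r}(\K)}=1\}$ is separable as a subspace, so I may fix a countable dense $\G\subseteq S$. For the continuous functional $F_f$ I would then show $\sup_{g\in\G}|F_f(g)|=\|F_f\|$: the inequality $\le$ is immediate since $\G\subseteq S$, while for $\ge$, given $g\in S$ and $\delta>0$ I pick $g'\in\G$ with $\|g-g'\|_{L^{\tilde r}}<\delta$ and use continuity of $F_f$ to get $|F_f(g')|\ge|F_f(g)|-\|F_f\|\,\delta$; letting $\delta\to 0$ and then taking the supremum over $g\in S$ yields the claim, which is \cite[Lemma~6.7]{carothers04short}. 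The main obstacle is not any single estimate but ensuring that each off-the-shelf theorem is invoked under exactly its stated hypotheses, in particular the finiteness of $\mu$, the range $\tilde r\in(1,\infty)$ needed for duality versus $\tilde r\in[1,\infty)$ for separability, and the use of the Borel rather than the Lebesgue $\sigma$-algebra.
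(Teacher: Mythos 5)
Your proposal is correct and follows essentially the same route as the paper, which establishes the lemma by citing exactly the three ingredients you assemble: the Riesz representation theorem for $L^{\tilde{r}}$ duality \cite[Theorem~4.1]{stein11functional}, separability of $L^{\tilde{r}}(\K)$ via the countably generated Borel $\sigma$-algebra \cite[Prop.~3.4.5]{cohn13measure}, and the reduction of $\left\|F_f\right\|$ to a countable supremum \cite[Lemma 6.7]{carothers04short}. Your explicit density argument for the last step and your flagging of the Borel-versus-Lebesgue $\sigma$-algebra subtlety match the paper's intent precisely, merely filling in routine details the paper delegates to the references.
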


{\small \putbib[./BIB/nips2015_short_conf_names]}
\end{appendices}
\end{changemargin}
\end{bibunit}

\begin{thebibliography}{10}

\bibitem{alaoui15fast}
A.~E. Alaoui and M. Mahoney.
\newblock Fast randomized kernel ridge regression with statistical guarantees.
\newblock In {\em NIPS}, 2015.

\bibitem{chaudhuri11differentially}
K. Chaudhuri, C. Monteleoni, and A.~D. Sarwate.
\newblock Differentially private empirical risk minimization.
\newblock {\em Journal of Machine Learning Research}, 12:1069--1109, 2011.

\bibitem{cotter11explicit}
A. Cotter, J. Keshet, and N. Srebro.
\newblock Explicit approximations of the {G}aussian kernel.
\newblock Technical report, 2011.
\newblock \url{http://arxiv.org/pdf/1109.4603.pdf}.

\bibitem{csorgo81multivariate}
S. Cs{\"o}rg{\H{o}}.
\newblock Multivariate empirical characteristic functions.
\newblock {\em Zeitschrift f{\"u}r Wahrscheinlichkeitstheorie und Verwandte
  Gebiete}, 55:203--229, 1981.

\bibitem{csorgo83howlong}
S. Cs{\"o}rg{\H{o}} and V. Totik.
\newblock On how long interval is the empirical characteristic function
  uniformly consistent?
\newblock {\em Acta Scientiarum Mathematicarum}, 45:141--149, 1983.

\bibitem{drineas05nystrom}
P. Drineas and M.~W. Mahoney.
\newblock On the {N}ystr{\"{o}}m method for approximating a {G}ram matrix for
  improved kernel-based learning.
\newblock {\em Journal of Machine Learning Research}, 6:2153--2175, 2005.

\bibitem{feuerverger77empirical}
A. Feuerverger and R.~A. Mureika.
\newblock The empirical characteristic function and its applications.
\newblock {\em Annals of Statistics}, 5(1):88--98, 1977.

\bibitem{folland99real}
G.~B. Folland.
\newblock {\em Real Analysis: Modern Techniques and Their Applications}.
\newblock Wiley-Interscience, 1999.

\bibitem{kulis12kernelized}
B. Kulis and K. Grauman.
\newblock Kernelized locality-sensitive hashing.
\newblock {\em IEEE Transactions on Pattern Analysis and Machine Intelligence},
  34:1092--1104, 2012.

\bibitem{lopezpaz15towards}
D. Lopez-Paz, K. Muandet, B. Sch{\"o}lkopf, and I. Tolstikhin.
\newblock Towards a learning theory of cause-effect inference.
\newblock {\em JMLR W\&CP -- ICML}, pages 1452--1461, 2015.

\bibitem{maji13efficient}
S. Maji, A.~C. Berg, and J. Malik.
\newblock Efficient classification for additive kernel {SVM}s.
\newblock {\em IEEE Transactions on Pattern Analysis and Machine Intelligence},
  35:66--77, 2013.

\bibitem{oliva15fast}
J. Oliva, W. Neiswanger, B. P{\'o}czos, E. Xing, and J. Schneider.
\newblock Fast function to function regression.
\newblock {\em JMLR W\&CP -- AISTATS}, pages 717--725, 2015.

\bibitem{rahimi07random}
A. Rahimi and B. Recht.
\newblock Random features for large-scale kernel machines.
\newblock In {\em NIPS}, pages 1177--1184, 2007.

\bibitem{rahimi08uniform}
A. Rahimi and B. Recht.
\newblock Uniform approximation of functions with random bases.
\newblock In {\em Allerton}, pages 555--561, 2008.

\bibitem{rosasco10regularization}
L. Rosasco, M. Santoro, S. Mosci, A. Verri, and S. Villa.
\newblock A regularization approach to nonlinear variable selection.
\newblock {\em JMLR W\&CP -- AISTATS}, 9:653--660, 2010.

\bibitem{rosasco13nonparametric}
L. Rosasco, S. Villa, S. Mosci, M. Santoro, and A. Verri.
\newblock Nonparametric sparsity and regularization.
\newblock {\em Journal of Machine Learning Research}, 14:1665--1714, 2013.

\bibitem{scholkopf02learning}
B. Sch{\"o}lkopf and A.~J. Smola.
\newblock {\em Learning with Kernels: Support Vector Machines, Regularization,
  Optimization, and Beyond}.
\newblock MIT Press, 2002.

\bibitem{shi10hermite}
L. Shi, X. Guo, and D.-X. Zhou.
\newblock Hermite learning with gradient data.
\newblock {\em Journal of Computational and Applied Mathematics},
  233:3046--3059, 2010.

\bibitem{shi09hash}
Q. Shi, J. Petterson, G. Dror, J. Langford, A. Smola, A. Strehl, and V.
  Vishwanathan.
\newblock Hash kernels.
\newblock {\em AISTATS}, 5:496--503, 2009.

\bibitem{sriperumbudur14density}
B.~K. Sriperumbudur, K. Fukumizu, A. Gretton, A. Hyv{\"a}rinen, and R. Kumar.
\newblock Density estimation in infinite dimensional exponential families.
\newblock Technical report, 2014.
\newblock \url{http://arxiv.org/pdf/1312.3516.pdf}.

\bibitem{strathmann15gradient}
H. Strathmann, D. Sejdinovic, S. Livingstone, Z. Szab{\'o}, and A. Gretton.
\newblock Gradient-free {H}amiltonian {M}onte {C}arlo with efficient kernel
  exponential families.
\newblock In {\em NIPS}, 2015.

\bibitem{sutherland15error}
D.~J. Sutherland and J. Schneider.
\newblock On the error of random {F}ourier features.
\newblock In {\em UAI}, pages 862--871, 2015.

\bibitem{vedaldi12efficient}
A. Vedaldi and A. Zisserman.
\newblock Efficient additive kernels via explicit feature maps.
\newblock {\em IEEE Transactions on Pattern Analysis and Machine Intelligence},
  34:480--492, 2012.

\bibitem{wendland05scattered}
H. Wendland.
\newblock {\em Scattered Data Approximation}.
\newblock Cambridge University Press, 2005.

\bibitem{Williams-01}
C.~K.~I. Williams and M. Seeger.
\newblock Using the {N}ystr{\"{o}}m method to speed up kernel machines.
\newblock In {\em NIPS}, pages 682--688, 2001.

\bibitem{ying12learning}
Y. Ying, Q. Wu, and C. Campbell.
\newblock Learning the coordinate gradients.
\newblock {\em Advances in Computational Mathematics}, 37:355--378, 2012.

\bibitem{yukich87some}
J.~E. Yukich.
\newblock Some limit theorems for the empirical process indexed by functions.
\newblock {\em Probability Theory and Related Fields}, 74:71--90, 1987.

\bibitem{zhou08derivative}
D.-X. Zhou.
\newblock Derivative reproducing properties for kernel methods in learning
  theory.
\newblock {\em Journal of Computational and Applied Mathematics}, 220:456--463,
  2008.

\end{thebibliography}


\begin{thebibliography}{10}

\bibitem{bogachev07measure}
V.~I. Bogachev.
\newblock {\em Measure Theory}.
\newblock Springer, 2007.

\bibitem{bousquet03new}
O. Bousquet.
\newblock New approaches to statistical learning theory.
\newblock {\em Annals of the Institute of Statistical Mathematics},
  55:371--389, 2003.

\bibitem{carothers04short}
N.~L. Carothers.
\newblock {\em A Short Course on Banach Space Theory}.
\newblock Cambridge University Press, 2004.

\bibitem{cohn13measure}
D.~L. Cohn.
\newblock {\em Measure Theory: Second Edition}.
\newblock Birkh{\"a}user Basel, 2013.

\bibitem{ledoux02probability}
M. Ledoux and M. Talagrand.
\newblock {\em Probability in {B}anach Spaces -- Isoperimetry and Processes}.
\newblock Springer-Verlag, 2002.

\bibitem{lindenstrauss79classical}
J. Lindenstrauss and L. Tzafriri.
\newblock {\em Classical {B}anach Spaces {II} -- Function Spaces}.
\newblock Springer-Verlag, 1979.

\bibitem{taylor04kernel}
J. Shawe-Taylor and N. Cristianini.
\newblock {\em Kernel Methods for Pattern Analysis}.
\newblock Cambridge University Press, 2004.

\bibitem{stein11functional}
E.~M. Stein and R. Shakarchi.
\newblock {\em Functional Analysis: Introduction to Further Topics in
  Analysis}.
\newblock Princeton University Press, 2011.

\bibitem{steinwart08support}
I. Steinwart and A. Christmann.
\newblock {\em Support Vector Machines}.
\newblock Springer, 2008.

\bibitem{geer09empirical}
S.~A. van~de Geer.
\newblock {\em Empirical Processes in {M}-Estimation}.
\newblock Cambridge University Press, 2009.

\bibitem{Vaart-96}
A.~W. van~der Vaart and J.~A. Wellner.
\newblock {\em Weak Convergence and Empirical Processes}.
\newblock Springer-Verlag, 1996.

\bibitem{yurinsky95sums}
V. Yurinsky.
\newblock {\em Sums and Gaussian Vectors}.
\newblock Springer, 1995.

\end{thebibliography}
\end{document}